\theoremstyle{definition}
\newtheorem{defn}{Definition}[section]
\newtheorem{example}{Example}[section]
\newtheorem{thm}{Theorem}[section]
\newtheorem{prop}{Proposition}[section]
\newcommand{\be}{\begin{equation}}
	\newcommand{\ee}{\end{equation}}
\newcommand{\beas}{\begin{eqnarray*}}
	\newcommand{\eeas}{\end{eqnarray*}}
\newcommand{\bea}{\begin{eqnarray}}
	\newcommand{\eea}{\end{eqnarray}}
\numberwithin{equation}{section}
\begin{document}
	\setcounter{page}{1}
	
\title[Complex symmetric weighted composition operators on the space $\mathcal{H}^2_2(\mathbb{D})$]{Complex symmetric weighted composition operators on the space $\mathcal{H}^2_2(\mathbb{D})$}
	
	\author[M. B. Ahamed and T. Rahman]{Molla Basir Ahamed$^*$ and Taimur Rahman}
	
	\address{Department of Mathematics, Jadavpur University, Kolkata-700032, West Bengal, India.}
	\email{mbahamed.math@jadavpuruniversity.in}
	
		\address{%
		Department of Mathematics, Jadavpur University, Kolkata-700032, West Bengal, India.}
	\email{taimurr.math.rs@jadavpuruniversity.in}
	
	\subjclass{47B33, 47B38, 30H10, 30H50, 30J10}
	
	\keywords{Composition Operator, Complex symmetric, Conjugation, Multiplication operator}
	\date{Received: xxxxx; 
		\newline\indent $^{1}$ Corresponding author
		\newline\indent File:- Paper-AT-5-2023}
	
\begin{abstract} 
In this paper, we introduce a new norm for $\mathcal{S}^2(\mathbb{D})$, encompassing functions whose first and second derivatives belong to both the Hardy space $\mathcal{H}^2(\mathbb{D})$ and the classical Bergman space $\mathcal{A}^2(\mathbb{D})$. Moreover, we present some basic properties of the space $\mathcal{H}^2_2(\mathbb{D})$ and subsequently establish conditions for symbols $\phi$ and $\Psi$ to provide $W_{\Psi, \phi}$ complex symmetric, employing a unique conjugation $\mathcal{J}$.
\end{abstract} \maketitle
	
\section{{Introduction}}
Let $\mathbb{D}$ represent the open unit disk in the complex plane $\mathbb{C}$, and let $H(\mathbb{D})$ denote the space comprising all holomorphic functions in this disk. If $f\in H(\mathbb{D})$ with power series representation $f(z)=\sum_{n=0}^{\infty}a_nz^n$, then the Hardy space is defined by 
\begin{align*}
	\mathcal{H}^2(\mathbb{D})=\bigg\{f\in\ H(\mathbb{D}):||f||^2_{\mathcal{H}^2}=\sum_{n=0}^{\infty}|a_n|^2<\infty\bigg\}
\end{align*}
and the classical Bergman space is defined by 
\begin{align*}
	\mathcal{A}^2(\mathbb{D})=\bigg\{f\in\ H(\mathbb{D}):||f||^2_{\mathcal{A}^2}=\sum_{n=0}^{\infty}\frac{|a_n|^2}{n+1}<\infty\bigg\}.
\end{align*}
The composition operators have been the subject of extensive research on these spaces in \cite{Cowen-MacCluer-CRCP-1995, Shapiro-SVNY-1993}. For other aspects of composition operators, we refer to the articles \cite{Higdon-JFA-2005,Liu-Ponnusamy-Xie-LMA-2023,Lo-Loh-JMAA-2023,Manhas-Zhao-JMAA-2012,Martin-Vukotic-JFA-2006,Martin-vukotic-PAMS-2006,McDonald-PAMS-2003,Mengestie-Seyoum-QM-2021,Mengestie-Seyoum-CMFT-2021,Noor-Severiano-PAMS-2020,Ohno-BAMS-2006,Ueki-PAMS-2007,Yao-JMAA-2017,Wulan-Zheng-Zhu-PAMS-2009} and references therein. There is the space $\mathcal{S}^2(\mathbb{D})$ (see \cite{Allen-Heller-Pons-ACM-2015, Heller-2010}) which encompasses holomorphic functions on $\mathbb{D}$, with the condition that their first derivative lies in the Hardy space $\mathcal{H}^2(\mathbb{D})$ and is defined as follows:
\begin{align*}
	\mathcal{S}^2(\mathbb{D})=\{f\in\ H(\mathbb{D}): ||f||^2_{\mathcal{S}^2}=|f(0)|^2+||f^{\prime}||^2_{\mathcal{H}^2}<\infty\}.
\end{align*}
In \cite{Gu-Luo-CVEE-2018}, Gu and Luo defined an equivalent norm on $\mathcal{S}^2(\mathbb{D})$, and denote the space by $\mathcal{S}^2_1(\mathbb{D})$.
\begin{align*}
	\mathcal{S}^2_1(\mathbb{D})&=\bigg\{f\in\ H(\mathbb{D}): ||f||^2_{\mathcal{S}^2_1}=||f||^2_{\mathcal{H}^2}+\frac{3}{2}||f^{\prime}||^2_{\mathcal{H}^2}+\frac{1}{2}||f^{\prime}||^2_{\mathcal{A}^2}<\infty\bigg\}\\&=\bigg\{f\in\ H(\mathbb{D}): ||f||^2_{\mathcal{S}^2_1}=\sum_{n\ge 0}\frac{(n+1)(n+2)}{2}|a_n|^2<\infty\bigg\}.
\end{align*}
This space is called the derivative Hardy space (see \cite{Gu-Luo-CVEE-2018}). Motivated by the definition of the derivative Hardy space, Gupta and Malhotra \cite{Gupta-Malhotra-CVEE-2020} introduced an equivalent norm for the space $\mathcal{S}^2(\mathbb{D})$ and referred to it as $\mathcal{H}^2_1(\mathbb{D})$. This space is defined as follows:
\begin{align*}
	\mathcal{H}^2_1(\mathbb{D})&=\bigg\{f\in\ H(\mathbb{D}): ||f||^2_{\mathcal{H}^2_1}=7||f||^2_{\mathcal{H}^2}+||f^{\prime}||^2_{\mathcal{H}^2}+||f||^2_{\mathcal{A}^2}+5||f^{\prime}||^2_{\mathcal{A}^2}<\infty\bigg\}\\&=\bigg\{f\in\ H(\mathbb{D}): ||f||^2_{\mathcal{H}^2_1}=7\sum_{n=0}^{\infty}|a_n|^2+\sum_{n=0}^{\infty}n^2|a_n|^2+\sum_{n=0}^{\infty}\frac{|a_n|^2}{n+1}\\&\quad\quad\quad\quad\quad\quad\quad\quad\quad\quad\quad+5\sum_{n=0}^{\infty}n|a_n|^2<\infty\bigg\}\\&=\bigg\{f\in\ H(\mathbb{D}): ||f||^2_{\mathcal{H}^2_1}=\sum_{n\ge 0}\frac{(n+2)^3}{n+1}|a_n|^2<\infty\bigg\}.	
\end{align*}
A reproducing kernel Hilbert space, denoted as $\mathcal{H}$, is characterized by its inner product $\langle ., .\rangle_{\mathcal{H}}$, which has the property that for every $w\in\mathbb{D}$, there is a function $K_w\in\mathcal{H}$, called the point evaluation kernel, such that $f(w) = \langle f, K_w\rangle_{\mathcal{H}}$ for all $f\in\mathcal{H}$. The Hardy space, Bergman space, $\mathcal{S}^2(\mathbb{D})$ space, and $\mathcal{H}^2_1(\mathbb{D})$ space all belong to the class of reproducing kernel Hilbert spaces, and their respective kernels are defined as follows:
\begin{align*}
	\begin{cases}
		&K_w(z)=\dfrac{1}{(1-\overline{w}z)}\; \mbox{on}\; \mathcal{H}^2(\mathbb{D});\\&K_w(z)=\dfrac{1}{(1-\overline{w}z)^2}\; \mbox{on}\; \mathcal{A}^2(\mathbb{D});\\&K_w(z)=\displaystyle\sum_{n=0}^{\infty}\frac{(\overline{w}z)^n}{1+n^2},\; \mbox{on}\;\mathcal{S}^2(\mathbb{D});\\&K_w(z)=\displaystyle\sum_{n=0}^{\infty}\frac{n+1}{(n+2)^3}(\overline{w}z)^n\; \mbox{on}\;\mathcal{H}^2_1(\mathbb{D}).
	\end{cases}
\end{align*}

With the definition of the derivative Hardy space as our motivation, our aim is to establish an equivalent norm for the space $\mathcal{S}^2(\mathbb{D})$, which we will refer to as $\mathcal{H}^2_2(\mathbb{D})$. The new space $\mathcal{H}^2_2(\mathbb{D})$ is defined as follows:
\begin{align*}
	\mathcal{H}^2_2(\mathbb{D})&=\bigg\{f\in\ H(\mathbb{D}): ||f||^2_{\mathcal{H}^2_2}=31||f||^2_{\mathcal{H}^2}+41||f^{\prime}||^2_{\mathcal{H}^2}+||f^{\prime\prime}||^2_{\mathcal{H}^2}+||f||^2_{\mathcal{A}^2}\\&\quad\quad\quad\quad\quad\quad\quad\quad\quad\quad\quad+49||f^{\prime}||^2_{\mathcal{A}^2}+11||f^{\prime\prime}||^2_{\mathcal{A}^2}<\infty\bigg\}\\&=\bigg\{f\in\ H(\mathbb{D}): ||f||^2_{\mathcal{H}^2_2}=31\sum_{n=0}^{\infty}|a|^2+41\sum_{n=0}^{\infty}n^2|a|^2+\sum_{n=0}^{\infty}n^2(n-1)^2|a|^2\\&\quad\quad\quad\quad\quad\quad\quad+\sum_{n=0}^{\infty}\frac{|a|^2}{n+1}+49\sum_{n=0}^{\infty}n|a|^2+11\sum_{n=0}^{\infty}n^2(n-1)|a|^2<\infty\bigg\}\\&=\bigg\{f\in\ H(\mathbb{D}): ||f||^2_{\mathcal{H}^2_1}=\sum_{n\ge 0}\frac{(n+2)^5}{n+1}|a_n|^2<\infty\bigg\}.		
\end{align*}
For $f, g\in\mathcal{H}^2_2(\mathbb{D})$, with power series representation $f(z)=\sum_{n=0}^{\infty}a_nz^n$ and $g(z)=\sum_{n=0}^{\infty}b_nz^n$, the inner product on $\mathcal{H}^2_2(\mathbb{D})$ is defined by
\begin{align*}
	\langle f,g \rangle_{\mathcal{H}^2_2}=&31\langle f,g \rangle_{\mathcal{H}^2}+41\langle f^{\prime},g^{\prime} \rangle_{\mathcal{H}^2}+\langle f^{\prime\prime},g^{\prime\prime} \rangle_{\mathcal{H}^2}+\langle f,g \rangle_{\mathcal{A}^2}+49\langle f^{\prime},g^{\prime} \rangle_{\mathcal{A}^2}+11\langle f^{\prime\prime},g^{\prime\prime} \rangle_{\mathcal{A}^2}\\=&31\sum_{n=0}^{\infty}a_n\overline{b_n}+41\sum_{n=0}^{\infty}n^2a_n\overline{b_n}+\sum_{n=0}^{\infty}n^2(n-1)^2a_n\overline{b_n}+\sum_{n=0}^{\infty}\frac{a_n\overline{b_n}}{n+1}+49\sum_{n=0}^{\infty}na_n\overline{b_n}\\&+11\sum_{n=0}^{\infty}n^2(n-1)a_n\overline{b_n}\\=&\sum_{n\ge 0}\frac{(n+2)^5}{n+1}a_n\overline{b_n}.
\end{align*}
For $w\in\mathbb{D}$, we define 
\begin{align*}
	K_w(z):=\sum_{n=0}^{\infty}\frac{n+1}{(n+2)^5}(\overline{w}z)^n \;\; \mbox{for}\;\; z\in\mathbb{D}.
\end{align*}
An easy computation shown that 
\begin{align*}
	\langle f(z), K_w(z) \rangle_{\mathcal{H}^2_2}=\sum_{n=0}^{\infty}\frac{(n+2)^5}{n+1}a_n\overline{\bigg(\frac{(n+1)}{(n+2)^5}\overline{w}^n\bigg)}=\sum_{n=0}^{\infty}a_nw^n=f(w), z\in\mathbb{D}.
\end{align*}
Hence, $K_w$ is the reproducing kernel for the space $\mathcal{H}^2_2(\mathbb{D})$.\vspace{1.2mm}

In this paper, we initiate by introducing some fundamental properties of the space $\mathcal{H}^2_2(\mathbb{D})$. We then proceed to investigate the criteria under which the composition operator $C_{\phi}:\mathcal{H}^2_2(\mathbb{D})\to\mathcal{H}^2_2(\mathbb{D})$, symbolized by $\phi$ and defined as $C_{\phi}f=f\circ\phi$ where $f$ is holomorphic on $\mathbb{D}$ and $\phi$ is a holomorphic map from $\mathbb{D}$ to itself, qualifies as a Hilbert-Schmidt operator.\vspace{1.2mm}

The study of complex symmetric operators on Hilbert spaces was initiated by Gartia and Putinar \cite{Garcia-Putinar-TAMS-2006,Garcia-Putinar-TAMS-2007}. Since then, this area has attracted significant research attention in complex analysis and operator theory. Complex symmetric operators include various instances, such as normal operators, Volterra integration operators, Hankel operators, Toeplitz operators, and more. Let $\Psi:\mathbb{D}\to\mathbb{C}$ and $\phi:\mathbb{D}\to\mathbb{D}$ be holomorphic functions. Then the weighted composition operator $W_{\Psi, \phi}$ is defined as 
\begin{align*}
	W_{\Psi, \phi}f=\Psi\cdot f\circ\phi.
\end{align*}
Several authors in \cite{Bourdon-Narayan-JMAA-2010,Garcia-Hammond-OPAP-2013,Jun-Kim-Ko-Lee-JFA-2014} studied normal and complex symmetric weighted composition operators with the special conjugation on the Hardy space $\mathcal{H}^2(\mathbb{D})$. Khoi and  Lim \cite{Khoi-Lim-JMAA-2018} studied the complex symmetry of weighted composition operators on the Hilbert space of analytic functions  $\mathcal{H}_{\gamma}(\mathbb{D})$ over $\mathbb{D}$ with reproducing kernels $K^{(\gamma)}_w=(1-\overline{w}z)^{-\gamma}$, where $\gamma\in\mathbb{N}$. Hai and Khoi \cite{Hai-Khoi-JMAA-2016,Hai-Khoi-CVEE-2018} established the conditions under which a weighted composition operator exhibits complex symmetry in the Fock space (see \cite{Hai-Khoi-JMAA-2016,Hai-Khoi-CVEE-2018}).\vspace{1.2mm} 

The outline of the paper is as follows. In Section 2, we characterize the space $\mathcal{H}^2_2(\mathbb{D})$ by exploring the basis. In the same spirit, In Section 3, we study multiplication operator on the space $\mathcal{H}^2_2(\mathbb{D})$. In the Section 4, of this paper, we explore the properties of $\phi$ and $\Psi$ concerning the weighted composition operator $W_{\phi, \Psi}$ in the space $\mathcal{H}^2_2(\mathbb{D})$, with the specific aim of confirming its complex symmetry. The proof of the main results are discussed in each sections.

\section{Characterization of the space $\mathcal{H}^2_2(\mathbb{D})$}

In this section, we will provide an account of some essential properties of the space $\mathcal{H}^2_2(\mathbb{D})$ that will be drawn upon in later sections. \vspace{1.2mm}
\begin{prop}\label{pro-2.1}
	The space $\mathcal{H}^2_2(\mathbb{D})$ satisfies the following:
	\begin{enumerate}
		\item [(a)] $\mathcal{H}^2_2(\mathbb{D})$ is a normed space. 
		\item[(b)] polynomials are contained in $\mathcal{H}^2_2(\mathbb{D})$.
		\item[(c)]$\mathcal{H}^2_2(\mathbb{D})$ is a proper subset of $\mathcal{H}^2(\mathbb{D})$,
	\end{enumerate}
	The following example serves as proof of the proper inclusion of part (c) in Proposition \ref{pro-2.1}.
\end{prop}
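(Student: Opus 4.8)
The plan is to reduce all three assertions to the single coefficient description $\|f\|^2_{\mathcal{H}^2_2}=\sum_{n\ge 0}w_n|a_n|^2$ with weights $w_n=(n+2)^5/(n+1)$, and to record at the outset that $w_n>0$ for every $n\ge 0$. For part (a) I would first check that the sesquilinear form $\langle f,g\rangle_{\mathcal{H}^2_2}=\sum_{n\ge 0}w_n a_n\overline{b_n}$ is a genuine inner product: conjugate symmetry and linearity in the first slot are immediate from the definition, while positive-definiteness is precisely the statement that $w_n>0$, since then $\|f\|^2_{\mathcal{H}^2_2}=\sum_n w_n|a_n|^2=0$ forces $a_n=0$ for all $n$, i.e. $f\equiv 0$. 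The norm axioms then follow in the standard way: absolute homogeneity is clear, and the triangle inequality is a consequence of the Cauchy--Schwarz inequality for this inner product. Note that completeness is not needed here, since (a) claims only that $\mathcal{H}^2_2(\mathbb{D})$ is a normed space.

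For part (b), if $f$ is a polynomial then $a_n=0$ for all sufficiently large $n$, so the defining series $\sum_n w_n|a_n|^2$ is a finite sum and hence certainly finite; thus every polynomial lies in $\mathcal{H}^2_2(\mathbb{D})$. For the inclusion in part (c), the key elementary estimate is $w_n=(n+2)^5/(n+1)\ge 1$ for every $n\ge 0$ (indeed $(n+2)^5\ge n+2\ge n+1$). Consequently, for $f\in\mathcal{H}^2_2(\mathbb{D})$,
\begin{align*}
\|f\|^2_{\mathcal{H}^2}=\sum_{n\ge 0}|a_n|^2\le\sum_{n\ge 0}\frac{(n+2)^5}{n+1}|a_n|^2=\|f\|^2_{\mathcal{H}^2_2}<\infty,
\end{align*}
so $f\in\mathcal{H}^2(\mathbb{D})$ and $\mathcal{H}^2_2(\mathbb{D})\subseteq\mathcal{H}^2(\mathbb{D})$, with the bonus norm comparison $\|f\|_{\mathcal{H}^2}\le\|f\|_{\mathcal{H}^2_2}$.

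It remains to show the inclusion is proper, which is exactly what the announced example will accomplish. Here the guiding observation is the asymptotics $w_n\sim n^4$ as $n\to\infty$, so I would look for coefficients that are square-summable but not summable against $n^4$; concretely, a choice such as $a_n=n^{-3/2}$ gives $\sum_n|a_n|^2=\sum_n n^{-3}<\infty$ (so the function lies in $\mathcal{H}^2(\mathbb{D})$) while $\sum_n w_n|a_n|^2\asymp\sum_n n=\infty$ fails, placing the function outside $\mathcal{H}^2_2(\mathbb{D})$. None of the steps presents a serious obstacle: the only points requiring a little care are the positivity of the weights (needed both for the inner-product structure in (a) and for the comparison in (c)) and the calibration of the decay rate in the counterexample, both of which are controlled by the explicit formula $w_n=(n+2)^5/(n+1)$ and its order-$n^4$ growth.
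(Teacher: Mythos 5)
Your argument is correct and follows essentially the same route as the paper: everything is reduced to the weight formula $\|f\|^2_{\mathcal{H}^2_2}=\sum_{n\ge 0}\frac{(n+2)^5}{n+1}|a_n|^2$, with the inclusion in (c) coming from $w_n\ge 1$ and properness from a function whose coefficients are square-summable but not summable against $w_n$. The only cosmetic difference is the concrete counterexample (you take $a_n=n^{-3/2}$, the paper takes $a_n=\sqrt{(n+1)/(n+2)^5}$ so that the divergent sum is exactly $\sum 1$); both are calibrated on the same $w_n\sim n^4$ asymptotics, and your choice just needs the trivial adjustment of defining $a_0$ separately.
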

\begin{example}
	Let $f(z)=\sum_{n=0}^{\infty}a_nz^n=\sum_{n=0}^{\infty}\sqrt{(n+1)/(n+2)^5}z^n$. Then 
	\begin{align*}
		||f||_{\mathcal{H}^2}=\sum_{n=0}^{\infty}|a_n|^2=\sum_{n=0}^{\infty}\frac{n+1}{(n+2)^5}=\zeta(4)-\zeta(5)\approxeq0.04539<\infty.
	\end{align*}
	Since
	\begin{align*}
		\sum_{n=0}^{\infty}\frac{(n+2)^5}{n+1}|a_n|^2=\sum_{n=0}^{\infty}\frac{(n+2)^5}{n+1}\frac{n+1}{(n+2)^5}=\sum_{n=0}^{\infty}1=\infty,
	\end{align*}
	so $f\notin\mathcal{H}^2_2(\mathbb{D})$.
	Hence, $f$ belongs to $\mathcal{H}^2(\mathbb{D})$ but it does not belong to $\mathcal{H}^2_2(\mathbb{D})$.   
\end{example}
To gain a comprehensive understanding of any space, it is essential to be familiar with its basis. For instance, when attempting to demonstrate that an operator defined on a given space is a Hilbert-Schmidt operator, it is necessary to have an orthonormal basis for that space. Since space $\mathcal{H}^2_2(\mathbb{D})$ has been recently constructed, understanding its basis is crucial for a thorough comprehension of this new space. The following theorem provides an orthonormal basis for the space $\mathcal{H}^2_2(\mathbb{D})$.
\begin{thm}\label{thm-2.1}
	Suppose $e_n(z)=\sqrt{(n+1)/(n+2)^5}z^n$, $z\in\mathbb{D}$ and $n$ is any non-negative integer. Then the set $ \{e_n(z):n\in\mathbb{N}\cup\{0\}\}$ forms an orthonormal basis for $\mathcal{H}^2_2(\mathbb{D})$.
\end{thm}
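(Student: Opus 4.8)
The plan is to verify directly that the proposed family $\{e_n\}_{n\geq 0}$ is orthonormal with respect to the inner product $\langle \cdot,\cdot\rangle_{\mathcal{H}^2_2}$ and then to show that it spans a dense subspace, which for a Hilbert space is equivalent to being a complete orthonormal system. Since the inner product has already been computed in the excerpt as
\begin{align*}
\langle f,g\rangle_{\mathcal{H}^2_2}=\sum_{n\geq 0}\frac{(n+2)^5}{n+1}\,a_n\overline{b_n},
\end{align*}
the monomials $z^n$ are already pairwise orthogonal, so the main computational step is just a normalization check.

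First I would compute $\langle e_m,e_n\rangle_{\mathcal{H}^2_2}$ using the coefficient formula. Writing $e_n(z)=\sqrt{(n+1)/(n+2)^5}\,z^n$, the only nonzero coefficient is the one in degree $n$, equal to $\sqrt{(n+1)/(n+2)^5}$. For $m\neq n$ the coefficient sequences have disjoint supports, so the sum defining $\langle e_m,e_n\rangle_{\mathcal{H}^2_2}$ is empty and the inner product vanishes. For $m=n$, the single surviving term is $\frac{(n+2)^5}{n+1}\cdot\frac{n+1}{(n+2)^5}=1$, so $\|e_n\|_{\mathcal{H}^2_2}=1$. This establishes $\langle e_m,e_n\rangle_{\mathcal{H}^2_2}=\delta_{mn}$, i.e.\ orthonormality. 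In particular each $e_n$ lies in $\mathcal{H}^2_2(\mathbb{D})$, which is also consistent with part (b) of Proposition \ref{pro-2.1}.

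Next I would establish completeness. The cleanest route is to take $f\in\mathcal{H}^2_2(\mathbb{D})$ with $f(z)=\sum_{n=0}^{\infty}a_nz^n$ and show that if $\langle f,e_n\rangle_{\mathcal{H}^2_2}=0$ for every $n$, then $f\equiv 0$. Computing $\langle f,e_n\rangle_{\mathcal{H}^2_2}=\frac{(n+2)^5}{n+1}\,a_n\cdot\sqrt{(n+1)/(n+2)^5}=\sqrt{(n+2)^5/(n+1)}\,a_n$, the vanishing of this quantity for each $n$ forces $a_n=0$ for all $n$, hence $f=0$. This shows $\{e_n\}$ has trivial orthogonal complement, which in a Hilbert space is equivalent to completeness of the orthonormal system; equivalently one may exhibit the Fourier expansion $f=\sum_n \langle f,e_n\rangle_{\mathcal{H}^2_2}\,e_n$ and verify Parseval's identity, $\|f\|_{\mathcal{H}^2_2}^2=\sum_n |\langle f,e_n\rangle_{\mathcal{H}^2_2}|^2=\sum_n \frac{(n+2)^5}{n+1}|a_n|^2$, which matches the norm by definition.

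The one genuine gap to address is that the density/completeness argument tacitly assumes $\mathcal{H}^2_2(\mathbb{D})$ is actually a Hilbert space, i.e.\ that it is complete as a metric space under the norm induced by $\langle\cdot,\cdot\rangle_{\mathcal{H}^2_2}$; otherwise "trivial orthogonal complement" need not yield a basis. I expect this to be the main obstacle, though it is routine: the map $f\mapsto(a_n)$ identifies $\mathcal{H}^2_2(\mathbb{D})$ isometrically with the weighted sequence space $\ell^2$ with weights $w_n=(n+2)^5/(n+1)$, and such weighted $\ell^2$ spaces are complete. I would either invoke this identification directly or note that $\mathcal{H}^2_2(\mathbb{D})$ is a reproducing kernel Hilbert space (its kernel $K_w$ having been exhibited in the excerpt), so completeness may be taken as part of the standing setup. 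Given completeness, orthonormality together with the triviality of the orthogonal complement yields that $\{e_n\}$ is an orthonormal basis.
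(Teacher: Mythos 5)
Your proof is correct and follows essentially the same route as the paper's: verify orthonormality from the coefficient form of the inner product (the paper re-derives the diagonal value $(n+2)^5/(n+1)$ from the six-term definition of $\langle\cdot,\cdot\rangle_{\mathcal{H}^2_2}$, while you simply quote the already-established formula $\sum_{n\ge 0}\frac{(n+2)^5}{n+1}a_n\overline{b_n}$), and then conclude by showing the orthogonal complement of $\{e_n\}$ is trivial. Your added observation that this last step requires $\mathcal{H}^2_2(\mathbb{D})$ to be complete as a metric space (settled, e.g., by the isometric identification with a weighted $\ell^2$ space) is a point the paper passes over silently and is worth retaining, but it does not constitute a different argument.
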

\begin{proof}[\bf Proof of Theorem \ref{thm-2.1}]
	For any non-negative integers $m$ and $n$, we have
	\begin{align}\label{Eq-2.1}
		\langle z^m, z^n\rangle_{\mathcal{H}^2_2}=31&\langle z^m, z^n\rangle_{\mathcal{H}^2}+41\langle mz^{m-1}, nz^{n-1}\rangle_{\mathcal{H}^2}+\langle m(m-1)z^{m-2}, n(n-1)z^{n-2}\rangle_{\mathcal{H}^2}\\+&\langle z^m, z^n\rangle_{\mathcal{A}^2}+49\langle mz^{m-1}, nz^{n-1}\rangle_{\mathcal{A}^2}+11\langle m(m-1)z^{m-2}, n(n-1)z^{n-2}\rangle_{\mathcal{A}^2}\nonumber.
	\end{align}
	For $n=m$, we obtain that 
	\begin{align*}
		\langle z^m, z^n\rangle_{\mathcal{H}^2_2}=&31\cdot1+41n^2+n^2(n-1)^2+\frac{1}{(n+1)}+49n+11n^2(n-1)\\=&\frac{n^5+10n^4
			+40n^3+80n^2+80n+32}{(n+1)}\\=&\frac{(n+2)^5}{(n+1)}.
	\end{align*}
	Therefore, it is easy to see that 
	\begin{align*}
		\langle z^m, z^n\rangle_{\mathcal{H}^2_2}=\begin{cases}
			\frac{(n+2)^5}{(n+1)}\;\;\;\;\;\mbox{if}\; m=n\\\;\;
			0\;\;\;\;\;\;\;\;\;\; \mbox{if}\;m\neq n.
		\end{cases}	
	\end{align*}
	Hence, $ \{e_n(z):n\in\mathbb{N}\cup\{0\}\}= \{\sqrt{(n+1)/(n+2)^5}z^n:n\in\mathbb{N}\cup\{0\}\}$ forms an orthonormal set in $\mathcal{H}^2_2(\mathbb{D})$.\vspace{1.2mm}
	
	Let $f\in\mathcal{H}^2_2(\mathbb{D})$ be such that $f$ is orthogonal to $e_n(z)$ for all $n\ge0$ and let the power series expansion of $f$ be given by $f(z)=\sum_{m=0}^{\infty}a_mz^m$. Then for $n\ge0$, $\langle f(z),e_n(z)\rangle=\langle\sum_{m=0}^{\infty}a_mz^m, e_n(z)\rangle=0$. This gives $a_m=0$ for all $m\ge0$. Thus, $f\equiv0$ and hence the set  $ \{e_n(z):n\in\mathbb{N}\cup\{0\}\}$ forms an orthonormal basis for $\mathcal{H}^2_2(\mathbb{D})$.  
\end{proof}
\section{Multiplication operator on the space  $\mathcal{H}^2_2(\mathbb{D})$}
Let $\mathcal{B}$ be a Banach space consisting of holomorphic functions defined on the open unit disk $\mathbb{D}$.Then for any function $f\in\mathcal{B}$, the multiplication operator $M_f$ is defined as follows:
\begin{align*}
	M_fg=fg
\end{align*}
for every holomorphic function $g$ defined on $\mathbb{D}$. Multiplication operators have a key role to play in the analysis of weighted composition operators in holomorphic function spaces. The aim is to find a connection between the properties of the multiplication operator $M_f$ and those of its corresponding symbol $f$. In this section, we will examine the circumstances that determine the boundedness of the multiplication operator $M_f$ on $\mathcal{H}^2_2(\mathbb{D})$.\vspace{1.2mm}

In a recent study, Gu and Luo (see \cite[Proposition 2.1]{Gu-Luo-CVEE-2018}) demonstrated that for functions belonging to $\mathcal{S}^2_1(\mathbb{D})$, it holds that $||f||{\infty}\le\sqrt{2}||f||{\mathcal{S}^2_1}$ where the constant $\sqrt{2}$ is sharp. Subsequently, Gupta and Malhotra (in Theorem-3.1, as referenced in \cite{Gupta-Malhotra-CVEE-2020}) extended this result by establishing that for functions in $\mathcal{H}^2_1(\mathbb{D})$, the norm $||f||{\infty}\le\sqrt{\pi^2/6-\zeta(3)}||f||{\mathcal{H}^2_1}$, and it was confirmed that the constant $\sqrt{\pi^2/6-\zeta(3)}$ is sharp.\vspace{1.2mm}

In the following result, we will introduce an analogous counterpart to Proposition 2.1 mentioned in \cite{Gu-Luo-CVEE-2018} and Theorem 3.1 discussed in \cite{Gupta-Malhotra-CVEE-2020}.
\begin{thm}\label{thm-3.1}
	Suppose $f\in\mathcal{H}^2_2(\mathbb{D})$. Then $||f||_{\infty}\le\sqrt{\zeta(4)-\zeta(5)}||f||_{\mathcal{H}^2_2}$. Furthermore, $\sqrt{\zeta(4)-\zeta(5)}$ is the sharp constant($\zeta$ is the Riemann zeta function and $\zeta(4)\approxeq1.08232$ and $\zeta(5)\approxeq1.03693$).
\end{thm}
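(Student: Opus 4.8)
The plan is to realize the sup-norm bound through the reproducing kernel of $\mathcal{H}^2_2(\mathbb{D})$, since point evaluation at any $w\in\mathbb{D}$ is given by $f(w)=\langle f,K_w\rangle_{\mathcal{H}^2_2}$. By Cauchy--Schwarz in the inner product $\langle\cdot,\cdot\rangle_{\mathcal{H}^2_2}$ one immediately gets $|f(w)|\le\|f\|_{\mathcal{H}^2_2}\,\|K_w\|_{\mathcal{H}^2_2}$, and since $\|K_w\|^2_{\mathcal{H}^2_2}=\langle K_w,K_w\rangle_{\mathcal{H}^2_2}=K_w(w)$, the whole problem reduces to estimating $K_w(w)$ uniformly over $w\in\mathbb{D}$. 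First I would compute
\begin{align*}
	K_w(w)=\sum_{n=0}^{\infty}\frac{n+1}{(n+2)^5}|w|^{2n},
\end{align*}
which is an increasing function of $|w|$, so its supremum over $\mathbb{D}$ is the radial limit as $|w|\to1^-$.

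Next I would take $|w|\to1^-$ and invoke monotone convergence to obtain
\begin{align*}
	\sup_{w\in\mathbb{D}}K_w(w)=\sum_{n=0}^{\infty}\frac{n+1}{(n+2)^5}.
\end{align*}
The key arithmetic step is to identify this numerical series with $\zeta(4)-\zeta(5)$. Writing $n+1=(n+2)-1$ gives $\frac{n+1}{(n+2)^5}=\frac{1}{(n+2)^4}-\frac{1}{(n+2)^5}$, and reindexing by $k=n+2$ turns the two pieces into the tails $\sum_{k=2}^{\infty}k^{-4}=\zeta(4)-1$ and $\sum_{k=2}^{\infty}k^{-5}=\zeta(5)-1$; their difference is exactly $\zeta(4)-\zeta(5)$. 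Combining this with the Cauchy--Schwarz estimate yields $|f(w)|\le\sqrt{\zeta(4)-\zeta(5)}\,\|f\|_{\mathcal{H}^2_2}$ for every $w$, and taking the supremum over $w$ gives the claimed inequality $\|f\|_{\infty}\le\sqrt{\zeta(4)-\zeta(5)}\,\|f\|_{\mathcal{H}^2_2}$.

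For sharpness I would exhibit a maximizing sequence or a genuine extremal. Equality in Cauchy--Schwarz forces $f$ to be a scalar multiple of $K_w$, so the natural candidates are the normalized kernels $f_w=K_w/\|K_w\|_{\mathcal{H}^2_2}$, for which $\|f_w\|_{\mathcal{H}^2_2}=1$ and $\sup_{z}|f_w(z)|\ge|f_w(w)|=\|K_w\|_{\mathcal{H}^2_2}=\sqrt{K_w(w)}$. Letting $|w|\to1^-$ drives $K_w(w)$ up to $\zeta(4)-\zeta(5)$, so the ratio $\|f_w\|_{\infty}/\|f_w\|_{\mathcal{H}^2_2}$ approaches $\sqrt{\zeta(4)-\zeta(5)}$, showing the constant cannot be improved.

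The routine parts are the Cauchy--Schwarz step and the series manipulation; the main obstacle is the sharpness argument, where I must be careful that $\sup_z|f_w(z)|$ is controlled from above as well, not merely below. One must verify that the sup-norm of the normalized kernel does not overshoot $\sqrt{K_w(w)}$ in a way that would contradict the upper bound already proved; in fact the upper bound guarantees $\|f_w\|_{\infty}\le\sqrt{\zeta(4)-\zeta(5)}$ for each fixed $w$, so the two estimates pinch the limiting ratio to exactly $\sqrt{\zeta(4)-\zeta(5)}$, confirming sharpness without needing an explicit extremal function in $\mathcal{H}^2_2(\mathbb{D})$.
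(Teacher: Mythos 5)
Your proposal is correct and is essentially the paper's argument in reproducing-kernel language: the estimate $|f(w)|\le\|f\|_{\mathcal{H}^2_2}\sqrt{K_w(w)}$ is exactly the paper's coefficient-level Cauchy--Schwarz step, and your evaluation of $\sum_{n\ge0}(n+1)/(n+2)^5$ as $\zeta(4)-\zeta(5)$ matches theirs. The only (harmless) difference is in the sharpness part: you use the normalized kernels $K_w/\|K_w\|_{\mathcal{H}^2_2}$ as a maximizing sequence, whereas the paper observes that the boundary kernel $f(z)=\sum_{n\ge0}((n+1)/(n+2)^5)z^n$ (formally $K_1$) already belongs to $\mathcal{H}^2_2(\mathbb{D})$, extends continuously to $\overline{\mathbb{D}}$, and attains equality exactly.
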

\begin{proof}[\bf Proof of Theorem \ref{thm-3.1}]
	Suppose $f\in\mathcal{H}^2_2(\mathbb{D})$ with power series representation $f(z)=\sum_{n=0}^{\infty}a_nz^n$. Then 
	\begin{align*}
		|f(z)|=\bigg|\sum_{n=0}^{\infty}a_nz^n\bigg|\le\sum_{n=0}^{\infty}|a_n||z|^n=\sum_{n=0}^{\infty}\sqrt{\frac{(n+2)^5}{n+1}}|a_n|\sqrt{\frac{n+1}{(n+2)^5}}|z|^n.
	\end{align*}
	By making use of the Cauchy-Schwarz inequality, we obtain that
	\begin{align*}
		|f(z)|\le&\sqrt{\bigg(\sum_{n=0}^{\infty}\frac{(n+2)^5}{n+1}|a_n|^2\bigg)}\sqrt{\bigg(\sum_{n=0}^{\infty}\frac{n+1}{(n+2)^5}|z|^{2n}\bigg)}\\=&\sqrt{\bigg(\sum_{n=0}^{\infty}\frac{n+1}{(n+2)^5}|z|^{2n}\bigg)}||f||_{\mathcal{H}^2_2}\\\le&\sqrt{\sum_{n=0}^{\infty}\frac{n+1}{(n+2)^5}}||f||_{\mathcal{H}^2_2}\\=&\sqrt{(\zeta(4)-\zeta(5))}||f||_{\mathcal{H}^2_2}.
	\end{align*}
	Hence, $||f||_{\infty}\le\sqrt{\zeta(4)-\zeta(5)}||f||_{\mathcal{H}^2_2}$.\vspace{1.2mm}
	
	If we consider the function  $f(z)=\sum_{n=0}^{\infty}((n+1)/(n+2)^5)z^n$, then $||f||_{\infty}=f(1)=\sum_{n=0}^{\infty}(n+1)/(n+2)^5=\zeta(4)-\zeta(5)$ and $||f||_{\mathcal{H}^2_2}=\sqrt{\zeta(4)-\zeta(5)}$. Hence, we conclude that the constant $\sqrt{\zeta(4)-\zeta(5)}$ is sharp. This completes the proof.
\end{proof}
Operator algebras constitute a core subject within functional analysis, a mathematical discipline concerned with vector spaces endowed with a topological structure. They offer a structure for comprehending the characteristics of linear operators, with a special focus on operators that are either bounded or unbounded. In 1969, the paper \cite{Kopp-PJM-1969} presented a proof that $D_{\alpha}$ is an algebra for $\alpha>1$. Furthermore, it was noted in \cite{Korenblum-MS-1972} that $\mathcal{S}^2_2(\mathbb{D})$ and its analogous counterparts $D_n$ for integer values of $n\ge1$ are also algebras, with reference to Russian literature. The research presented in \cite{Allen-Heller-Pons-ACM-2015} and \cite{Cuckovic-Paudyal-JMAA-2015} established that both $\mathcal{S}^2(\mathbb{D})$ and $\mathcal{S}^2_2(\mathbb{D})$ possess the characteristics of algebras. Recently, Guo and Luo \cite{Gu-Luo-CVEE-2018} demonstrated that $\mathcal{S}^2_1(\mathbb{D})$ is an algebra, and subsequently, Gupta and Malhotra \cite{Gupta-Malhotra-CVEE-2020} established that $\mathcal{H}^2_1(\mathbb{D})$ is also an algebra. Here, we will provide proof demonstrating that $\mathcal{H}^2_2(\mathbb{D})$ constitutes an algebra. 
\begin{prop}\label{pro-3.1}
	The space $\mathcal{H}^2_2(\mathbb{D})$ is an algebra.
\end{prop}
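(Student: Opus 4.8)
The plan is to show that $\mathcal{H}^2_2(\mathbb{D})$ is closed under pointwise multiplication, which combined with Proposition \ref{pro-2.1}(a) and the fact that it is a subspace of $H(\mathbb{D})$ establishes the algebra structure. Concretely, given $f,g\in\mathcal{H}^2_2(\mathbb{D})$ with power series $f(z)=\sum_{n=0}^{\infty}a_nz^n$ and $g(z)=\sum_{n=0}^{\infty}b_nz^n$, I must prove that the product $fg$ again lies in $\mathcal{H}^2_2(\mathbb{D})$, i.e. that $\|fg\|_{\mathcal{H}^2_2}<\infty$. The cleanest route is to work through the derivative-norm decomposition rather than the coefficient sum: since $\|h\|_{\mathcal{H}^2_2}^2$ is a fixed linear combination of $\|h\|_{\mathcal{H}^2}^2$, $\|h'\|_{\mathcal{H}^2}^2$, $\|h''\|_{\mathcal{H}^2}^2$ and the corresponding Bergman norms, it suffices to bound each of these six pieces for $h=fg$.

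First I would record the product rule: $(fg)'=f'g+fg'$ and $(fg)''=f''g+2f'g'+fg''$. Then the task reduces to bounding quantities such as $\|f''g\|_{\mathcal{H}^2}$, $\|f'g'\|_{\mathcal{H}^2}$, $\|fg''\|_{\mathcal{A}^2}$, and so on. The key analytic input will be Theorem \ref{thm-3.1}, which furnishes the sup-norm control $\|h\|_{\infty}\le\sqrt{\zeta(4)-\zeta(5)}\,\|h\|_{\mathcal{H}^2_2}$ for any $h\in\mathcal{H}^2_2(\mathbb{D})$; in particular both $f$ and $g$ are bounded on $\mathbb{D}$. For a product of a bounded function and an $\mathcal{H}^2$ (or $\mathcal{A}^2$) function one has the elementary estimate $\|uv\|_{\mathcal{H}^2}\le\|u\|_{\infty}\|v\|_{\mathcal{H}^2}$ and likewise for $\mathcal{A}^2$, so terms like $\|f''g\|_{\mathcal{H}^2}\le\|g\|_{\infty}\|f''\|_{\mathcal{H}^2}$ are immediately finite. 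The genuinely problematic terms are the mixed middle terms, above all $\|f'g'\|_{\mathcal{H}^2}$ appearing in the second derivative, since neither $f'$ nor $g'$ need be bounded even though $f,g$ are.

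To handle the mixed term I would establish that $f'\in\mathcal{H}^2_1(\mathbb{D})$ (the derivative-Hardy space) whenever $f\in\mathcal{H}^2_2(\mathbb{D})$: from the coefficient description, $\|f'\|_{\mathcal{H}^2_1}^2=\sum_n\frac{(n+1)^3}{n}\cdot n^2|a_n|^2$ type weights are dominated by the $(n+2)^5/(n+1)$ weights defining $\|f\|_{\mathcal{H}^2_2}^2$, so the inclusion and the norm bound $\|f'\|_{\mathcal{H}^2_1}\lesssim\|f\|_{\mathcal{H}^2_2}$ hold. Since $\mathcal{H}^2_1(\mathbb{D})$ is already known to be an algebra (Gupta–Malhotra), $f'g'\in\mathcal{H}^2_1(\mathbb{D})\subset\mathcal{H}^2(\mathbb{D})$ with a controlled norm, which settles the obstinate term; alternatively one can invoke the sup-norm bound for $\mathcal{H}^2_1$ to deduce $f'$ or $g'$ is bounded and reduce again to the product-with-bounded-factor estimate. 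Collecting the six finite bounds and assembling them into $\|fg\|_{\mathcal{H}^2_2}$ then yields an inequality of the shape $\|fg\|_{\mathcal{H}^2_2}\le C\,\|f\|_{\mathcal{H}^2_2}\|g\|_{\mathcal{H}^2_2}$ for an explicit constant $C$, completing the proof. I expect the main obstacle to be precisely the bookkeeping for the middle derivative terms: verifying that every cross term arising from the product rule, especially $f'g'$ in the second derivative, embeds into a space on which boundedness is already available, and tracking the resulting multiplicative constant so the final algebra inequality is genuine.
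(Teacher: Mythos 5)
Your argument is correct, but it follows a genuinely different route from the paper's own proof of Proposition \ref{pro-3.1}. The paper does not verify closure under multiplication at this point at all: it shows instead, via a Cauchy--Schwarz tail estimate expressed through polygamma functions, that the Taylor partial sums of any $f\in\mathcal{H}^2_2(\mathbb{D})$ converge uniformly, so that $f$ extends continuously to $\partial\mathbb{D}$ and $\mathcal{H}^2_2(\mathbb{D})$ embeds in the disk algebra $A(\mathbb{D})$; it then concludes from the sandwich $A^1(\mathbb{D})\subseteq\mathcal{H}^2_2(\mathbb{D})\subseteq A(\mathbb{D})$ that the space is an algebra, deferring the quantitative inequality $\|fg\|_{\mathcal{H}^2_2}\le 2\sqrt{2}\sqrt{\zeta(4)-\zeta(5)}\,\|f\|_{\mathcal{H}^2_2}\|g\|_{\mathcal{H}^2_2}$ to Theorem \ref{thm-3.2}. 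What you propose is essentially the content of Theorem \ref{thm-3.2} proved up front: decompose $\|fg\|^2_{\mathcal{H}^2_2}$ into its six constituent Hardy and Bergman norms, apply the product rule, and control each piece by $\|uv\|\le\|u\|_\infty\|v\|$ together with the sup-norm bound of Theorem \ref{thm-3.1}. Your route is in fact more careful on one point: you isolate the cross term $f'g'$ in $(fg)''$ as the one term not covered by the bounded-factor estimate, and your fix --- the embedding $\|f'\|^2_{\mathcal{H}^2_1}=\sum_{n\ge1}n(n+1)^3|a_n|^2\le\sum_{n\ge0}\tfrac{(n+2)^5}{n+1}|a_n|^2=\|f\|^2_{\mathcal{H}^2_2}$ followed by the Gupta--Malhotra sup-norm bound on $\mathcal{H}^2_1(\mathbb{D})$, giving $\|f'g'\|_{\mathcal{H}^2}\le\|f'\|_\infty\|g'\|_{\mathcal{H}^2}$ and likewise in $\mathcal{A}^2$ --- is valid and makes explicit a step that the displayed chain of estimates in Theorem \ref{thm-3.2} passes over silently. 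As for what each approach buys: the paper's argument yields the extra qualitative information that $\mathcal{H}^2_2(\mathbb{D})$ consists of functions continuous up to the boundary, whereas yours delivers the Banach-algebra inequality directly and is, read strictly, the more self-contained justification of closure under products, since a set merely sandwiched between two algebras need not itself be closed under multiplication.
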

\begin{proof}[\bf Proof of Proposition \ref{pro-3.1}]
	Suppose $f\in\mathcal{H}^2_2(\mathbb{D})$ with power series representation $f(z)=\sum_{n=0}^{\infty}a_nz^n$. By simple computation, we obtain the following inequality
	\begin{align*}
		\bigg|f(z)-\sum_{n=0}^{N}a_nz^n\bigg|\le&\bigg|\sum_{n=N+1}^{\infty}\sqrt{\frac{(n+2)^5}{n+1}}a_n\sqrt{\frac{n+1}{(n+2)^5}}z^n\bigg|\\\le&\sum_{n=N+1}^{\infty}\sqrt{\frac{(n+2)^5}{n+1}}|a_n|\sqrt{\frac{n+1}{(n+2)^5}}|z|^n\\\le&\sqrt{\sum_{n=N+1}^{\infty}\frac{(n+2)^5}{n+1}|a_n|^2}\sqrt{\sum_{n=N+1}^{\infty}\frac{n+1}{(n+2)^5}|z|^{2n}}\\\le&||f||_{\mathcal{H}^2_2}\sqrt{\sum_{n=N+1}^{\infty}\frac{n+1}{(n+2)^5}}\\=&||f||_{\mathcal{H}^2_2}\sqrt{\sum_{n=N+1}^{\infty}\frac{1}{(n+2)^4}-\sum_{n=N+1}^{\infty}\frac{1}{(n+2)^5}}\\=&||f||_{\mathcal{H}^2_2}\sqrt{\sum_{n=0}^{\infty}\frac{1}{(n+N+3)^4}-\sum_{n=0}^{\infty}\frac{1}{(n+N+3)^5}}\\=&||f||_{\mathcal{H}^2_2}\sqrt{\frac{1}{6}Polygamma[3, N+3]-(-\frac{1}{24})Polygamma[4, N+3]}\\=&||f||_{\mathcal{H}^2_2}\sqrt{\frac{1}{24}(4Polygamma[3, N+3]+Polygamma[4, N+3])}
	\end{align*}
	(note that $Polygamma[n, z]$ represents the $n^{th}$ derivative, denoted as $\Psi^n(z)$, of the Digamma function $\Psi(z)$. The Digamma function $\Psi(z)$ serves as the logarithmic derivative of the gamma function $\Gamma(z)$, and we can express it as $\Psi(z) = (d/dz)\ln(\Gamma(z)) = \Gamma'(z)/\Gamma(z)$, where $\Gamma(z) = \int_{0}^{\infty}t^{t-1}e^{-t}dt$. The power series representation of the Polygamma function is as follows:
	\begin{align*}
		Polygamma[m, z] := \Psi^m(z) = (-1)^{m+1}m!\sum_{n=0}^{\infty}1/(n+z)^{m+1}.
	\end{align*}
	It should be noted that this representation is valid for values of $z$ greater than zero and for all natural numbers $n$.) By utilizing Mathematica software to observe that the limit of the expression 
	\begin{align*}
		\sqrt{\frac{1}{24}(4Polygamma[3, N+3]+Polygamma[4, N+3])}
	\end{align*} approaches zero as $N$ tends to $\infty$, we can conclude that $f(z)$ is the uniform limit of its partial sum. Consequently, $f(z)$ maintains a continuous extension to the boundary of $\mathbb{D}$. The space $\mathcal{H}^2_2(\mathbb{D})$ is a subset of the disk algebra  $A(\mathbb{D})$, where $A(\mathbb{D})$ comprises holomorphic functions mapping from $\mathbb{D}$ to $\mathbb{C}$ that exhibit continuous extensions to the boundary of $\mathbb{D}$. Let $A^1(\mathbb{D})=\{f\in A(\mathbb{D}):f^\prime(z)\in A(\mathbb{D})\}$. Then, $A^1(\mathbb{D})\subseteq\mathcal{H}^2_2(\mathbb{D})\subseteq A(\mathbb{D})$. Therefore, $\mathcal{H}^2_2(\mathbb{D})$ is an algebra. This completes the proof.
\end{proof}
In a recent investigation, Gu and Luo (see Theorem 2.2, \cite{Gu-Luo-CVEE-2018}) established that for $f$ and $g$ in $\mathcal{S}^2_1(\mathbb{D})$, the inequality 
\begin{align*}
	||fg||{\mathcal{S}^2_1}\le2\sqrt{2}||f||{\mathcal{S}^2_1}||g||{\mathcal{S}^2_1}
\end{align*} holds. Subsequently, Gupta and Malhotra (see Theorem 3.4, \cite{Gupta-Malhotra-CVEE-2020}) established that for $f$ and $g$ belonging to $\mathcal{H}^2_1(\mathbb{D})$, the inequality 
\begin{align*}
	||fg||^2_{\mathcal{H}^2_1}\le6(\pi^2/6-\zeta(3))||f||^2_{\mathcal{H}^2_1}||g||^2_{\mathcal{H}^2_1}
\end{align*} is valid. By a similar line of reasoning to that employed in Theorem 2.2 from \cite{Gu-Luo-CVEE-2018} and Theorem 3.4 from \cite{Gupta-Malhotra-CVEE-2020}, our objective in this section is to establish that, for $f$ and $g$ in $\mathcal{H}^2_2(\mathbb{D})$, the inequality \begin{align*}
||fg||_{\mathcal{H}^2_2}\le2\sqrt{2}\sqrt{\zeta(4)-\zeta(5)}||f||_{\mathcal{H}^2_2}||g||_{\mathcal{H}^2_2}
\end{align*} holds true.\vspace{1.2mm}

We obtain the following result for the class $\mathcal{H}^2_2(\mathbb{D})$ which is similar to \cite[Theorem 2.3]{Gu-Luo-CVEE-2018} and \cite[Theorem 3.5]{Gupta-Malhotra-CVEE-2020}.
\begin{thm}\label{thm-3.2}
	If $f, g\in\mathcal{H}^2_2(\mathbb{D})$, then $||fg||_{\mathcal{H}^2_2}\le2\sqrt{2}\sqrt{\zeta(4)-\zeta(5)}||f||_{\mathcal{H}^2_2}||g||_{\mathcal{H}^2_2}$. (note that $\zeta(4)-\zeta(5)\approxeq0.04539$).
\end{thm}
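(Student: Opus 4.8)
The plan is to bypass the derivative expansion and work directly with Taylor coefficients, which turns the estimate into a single scalar inequality about the weight sequence $w_n=(n+2)^5/(n+1)$. Write $f(z)=\sum_{n\ge0}a_nz^n$ and $g(z)=\sum_{n\ge0}b_nz^n$, so that $fg=\sum_{n\ge0}c_nz^n$ with $c_n=\sum_{k=0}^n a_kb_{n-k}$. Since $\|fg\|_{\mathcal{H}^2_2}^2=\sum_{n\ge0}w_n|c_n|^2$, I would first estimate each $|c_n|$ by inserting the weights and applying the Cauchy--Schwarz inequality,
\begin{align*}
|c_n|^2\le\Bigl(\sum_{k=0}^n w_k|a_k|^2\,w_{n-k}|b_{n-k}|^2\Bigr)\Bigl(\sum_{k=0}^n\frac{1}{w_kw_{n-k}}\Bigr).
\end{align*}
Multiplying by $w_n$, summing over $n$, and using that the double sum $\sum_n\sum_k w_k|a_k|^2\,w_{n-k}|b_{n-k}|^2$ factors as $\|f\|_{\mathcal{H}^2_2}^2\|g\|_{\mathcal{H}^2_2}^2$, this reduces the whole theorem to the scalar bound
\begin{align*}
M^2:=\sup_{n\ge0}\;w_n\sum_{k=0}^n\frac{1}{w_kw_{n-k}}\le 8\bigl(\zeta(4)-\zeta(5)\bigr),
\end{align*}
after which $\|fg\|_{\mathcal{H}^2_2}\le M\,\|f\|_{\mathcal{H}^2_2}\|g\|_{\mathcal{H}^2_2}\le 2\sqrt{2}\sqrt{\zeta(4)-\zeta(5)}\,\|f\|_{\mathcal{H}^2_2}\|g\|_{\mathcal{H}^2_2}$.

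The heart of the argument is therefore the supremum estimate, and this is the step I expect to be the main obstacle. Here I would exploit that $1/w_j=(j+1)/(j+2)^5$ is a rapidly decreasing summable sequence with $\sum_{j\ge0}1/w_j=\zeta(4)-\zeta(5)$, exactly the constant appearing in Theorem \ref{thm-3.1}. Rewriting $w_n\sum_k 1/(w_kw_{n-k})=\sum_{k=0}^n (1/w_k)(w_n/w_{n-k})$, the two endpoint terms $k=0$ and $k=n$ each equal $1/w_0=1/32$, while for the interior I would split the sum at $k=n/2$, use the monotonicity of $(w_n)$ together with the crude ratio bound $w_n/w_{n-k}\le\bigl((n+2)/(n-k+2)\bigr)^5$, and verify that the remaining contribution stays uniformly bounded. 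The point is that the convolution localizes near the endpoints, so the supremum is comfortably below $8(\zeta(4)-\zeta(5))$; indeed it is finite and attained at a moderate value of $n$, which is what leaves room for the stated constant.

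As an alternative that parallels \cite{Gu-Luo-CVEE-2018} and \cite{Gupta-Malhotra-CVEE-2020}, one may instead expand $\|fg\|_{\mathcal{H}^2_2}^2$ into its six constituent Hardy and Bergman norms, apply the product rule to $(fg)'=f'g+fg'$ and $(fg)''=f''g+2f'g'+fg''$, and bound each factor by the pointwise multiplier inequalities $\|uv\|_{\mathcal{H}^2}\le\|u\|_\infty\|v\|_{\mathcal{H}^2}$ and $\|uv\|_{\mathcal{A}^2}\le\|u\|_\infty\|v\|_{\mathcal{A}^2}$, placing $\|\cdot\|_\infty$ on the lower-order factor and invoking Theorem \ref{thm-3.1}. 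The genuinely new difficulty here, absent from the first-derivative spaces $\mathcal{S}^2_1(\mathbb{D})$ and $\mathcal{H}^2_1(\mathbb{D})$, is the cross term $f'g'$ arising in $(fg)''$: it cannot be routed through $\|f''\|_\infty$, since the governing series $\sum_n n^2(n-1)^2(n+1)/(n+2)^5$ diverges, and must instead be controlled by a first-derivative sup-norm estimate $\|f'\|_\infty\le\bigl(\sum_{n\ge1}n^2(n+1)/(n+2)^5\bigr)^{1/2}\|f\|_{\mathcal{H}^2_2}$, whose defining series does converge. Tracking the resulting constants so that they collapse to exactly $2\sqrt{2}\sqrt{\zeta(4)-\zeta(5)}$ is the delicate bookkeeping this second route demands, which is why I would favor the coefficient method above.
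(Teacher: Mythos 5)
Your primary route---Cauchy--Schwarz applied to the Cauchy product of the Taylor coefficients---is genuinely different from the paper's argument (the paper follows what you call the alternative route: it expands $\|fg\|^2_{\mathcal{H}^2_2}$ into its six Hardy/Bergman pieces, applies the product rule, and uses $\|uv\|\le\|u\|_\infty\|v\|$ together with Theorem \ref{thm-3.1}). The reduction itself is correct: with $w_n=(n+2)^5/(n+1)$ one does get $\|fg\|^2_{\mathcal{H}^2_2}\le M^2\|f\|^2_{\mathcal{H}^2_2}\|g\|^2_{\mathcal{H}^2_2}$ where $M^2=\sup_n S_n$ and $S_n=w_n\sum_{k=0}^n(w_kw_{n-k})^{-1}$. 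The gap is that the entire weight of the proof now rests on the scalar inequality $M^2\le 8(\zeta(4)-\zeta(5))$, and you do not prove it; the method you sketch cannot deliver that constant. On the range $k\le n/2$ your ratio bound gives only $w_n/w_{n-k}\le\bigl((n+2)/(n/2+2)\bigr)^5\le 2^5=32$, so after symmetrizing the sketch yields at best $S_n\le 64(\zeta(4)-\zeta(5))$ --- a factor of $8$ above the target, i.e.\ a final constant $8\sqrt{\zeta(4)-\zeta(5)}$ in place of $2\sqrt{2}\sqrt{\zeta(4)-\zeta(5)}$. As written, you prove that $\mathcal{H}^2_2(\mathbb{D})$ is a Banach algebra under some equivalent constant, but not the stated inequality.

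The inequality you need is in fact true with room to spare: the two endpoint terms $k=0$ and $k=n$ contribute $2/w_0=1/16$, $S_n$ rises from $S_0=1/32$ to a maximum of roughly $0.12$ near $n=7$, and then decreases to the limit $2\sum_{k\ge0}w_k^{-1}=2(\zeta(4)-\zeta(5))\approx 0.091$, all comfortably below $8(\zeta(4)-\zeta(5))\approx 0.363$. But turning this into a proof requires more than the crude ratio bound: for instance, isolate the endpoints, use a sharper pointwise estimate such as $(k+2)(n-k+2)\ge 2(n+2)$ for the interior, and finish with an explicit check of finitely many $n$ plus a genuine tail/dominated-convergence argument. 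Until that step is supplied, the part you yourself flag as ``the heart of the argument'' is missing. (Your side remark about the cross term $f'g'$ in the second route is a good catch: it really cannot be absorbed into $\|f\|_\infty\|g''\|$-type terms and needs a bound like $\|f'\|_\infty\le C\|f\|_{\mathcal{H}^2_2}$; the paper's own display passes over exactly this point.)
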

\begin{proof}[\bf Proof of Theorem \ref{thm-3.2}]
	Suppose that $f, g\in\mathcal{H}^2_2(\mathbb{D})$. Then  a straightforward computation leads to the following 
	\begin{align*}
		||fg||^2_{\mathcal{H}^2_2}=&31||fg||^2_{\mathcal{H}^2}+41||f^\prime g+fg^\prime||^2_{\mathcal{H}^2}+||(fg)^{\prime\prime}||^2_{\mathcal{H}^2}+||fg||^2_{\mathcal{A}^2}+49||f^\prime g+fg^\prime||^2_{\mathcal{A}^2}\\&+11||(fg)^{\prime\prime}||^2_{\mathcal{A}^2}\\\le&31(||f||^2_{\infty}||g||^2_{\mathcal{H}^2}+||f||^2_{\mathcal{H}^2}||g||^2_{\infty})+41(2||f||^2_{\infty}||g^{\prime}||^2_{\mathcal{H}^2}+2||f^{\prime}||^2_{\mathcal{H}^2}||g||^2_{\infty})\\&+(4||f||^2_{\infty}||g^{\prime\prime}||^2_{\mathcal{H}^2}+2||f^{\prime\prime}||^2_{\mathcal{H}^2}||g||^2_{\infty}+8||f||^2_{\infty}||g^{\prime}||^2_{\mathcal{H}^2}+8||f^{\prime}||^2_{\mathcal{H}^2}||g||^2_{\infty})\\&+(||f||^2_{\infty}||g||^2_{\mathcal{A}^2}+||f||^2_{\mathcal{A}^2}||g||^2_{\infty})+49(2||f||^2_{\infty}||g^{\prime}||^2_{\mathcal{A}^2}+2||f^{\prime}||^2_{\mathcal{A}^2}||g||^2_{\infty})\\&+11(4||f||^2_{\infty}||g^{\prime\prime}||^2_{\mathcal{H}^2}+2||f^{\prime\prime}||^2_{\mathcal{H}^2}||g||^2_{\infty}+8||f||^2_{\infty}||g^{\prime}||^2_{\mathcal{H}^2}+8||f^{\prime}||^2_{\mathcal{H}^2}||g||^2_{\infty})\\=&||f||^2_{\infty}(31||g||^2_{\mathcal{H}^2}+41||g^\prime||^2_{\mathcal{H}^2}+||g^{\prime\prime}||^2_{\mathcal{H}^2}+||g||^2_{\mathcal{A}^2}+49||g^\prime||^2_{\mathcal{A}^2}+11||g^{\prime\prime}||^2_{\mathcal{A}^2})\\&+||g||^2_{\infty}(31||f||^2_{\mathcal{H}^2}+41||f^{\prime}||^2_{\mathcal{H}^2}+||f^{\prime\prime}||^2_{\mathcal{H}^2}+||f||^2_{\mathcal{A}^2}+49||f^{\prime}||^2_{\mathcal{A}^2}+11||f^{\prime\prime}||^2_{\mathcal{A}^2})\\&+||f||^2_{\infty}(49||g^\prime||^2_{\mathcal{H}^2}+3||g^{\prime\prime}||^2_{\mathcal{H}^2}+137||g^\prime||^2_{\mathcal{A}^2}+33||g^{\prime\prime}||^2_{\mathcal{A}^2})+||g||^2_{\infty}(49||f^\prime||^2_{\mathcal{H}^2}\\&+||f^{\prime\prime}||^2_{\mathcal{H}^2}+137||f^\prime||^2_{\mathcal{A}^2}+11||f^{\prime\prime}||^2_{\mathcal{A}^2})\\\le&||f||^2_{\infty}||g||^2_{\mathcal{H}^2_2}+||g||^2_{\infty}||f||^2_{\mathcal{H}^2_2}+3||f||^2_{\infty}||g||^2_{\mathcal{H}^2_2}+3||g||^2_{\infty}||f||^2_{\mathcal{H}^2_2}\\=&4(||f||^2_{\infty}||g||^2_{\mathcal{H}^2_2}+||g||^2_{\mathcal{H}^2_2}||g||^2_{\infty})\\\le&8(\zeta(4)-\zeta(5))||f||^2_{\mathcal{H}^2_2}||g||^2_{\mathcal{H}^2_2}.
	\end{align*}
	Consequently, the desired inequality $||fg||_{\mathcal{H}^2_2}\le2\sqrt{2}\sqrt{(\zeta(4)-\zeta(5))}||f||_{\mathcal{H}^2_2}||g||_{\mathcal{H}^2_2}$ is established. This completes the proof. 
\end{proof}
\begin{thm}\label{thm-3.3}
	If $f\in H(\mathbb{D})$, then $M_f$ is a bounded linear operator on $\mathcal{H}^2_2(\mathbb{D})$ if and only if $f\in\mathcal{H}^2_2(\mathbb{D})$. Moreover, 
	\begin{align*}
		\max\{||f||_{\infty},||f||_{\mathcal{H}^2_2}\}\le||M_f||\le2\sqrt{2}\sqrt{(\zeta(4)-\zeta(5))}||f||_{\mathcal{H}^2_2}.
	\end{align*}
\end{thm}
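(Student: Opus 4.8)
The plan is to prove the biconditional in two directions and then pin down the two-sided norm estimate separately, using Theorem \ref{thm-3.2} for the positive direction and the reproducing kernel $K_w$ for the lower bound.

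\textbf{Sufficiency and the upper bound.} First I would assume $f\in\mathcal{H}^2_2(\mathbb{D})$. For an arbitrary $g\in\mathcal{H}^2_2(\mathbb{D})$, Theorem \ref{thm-3.2} gives $||fg||_{\mathcal{H}^2_2}\le 2\sqrt{2}\sqrt{\zeta(4)-\zeta(5)}\,||f||_{\mathcal{H}^2_2}||g||_{\mathcal{H}^2_2}<\infty$, so in particular $M_fg=fg\in\mathcal{H}^2_2(\mathbb{D})$ and $M_f$ maps the space into itself. Since $M_f$ is visibly linear, the same inequality shows it is bounded with $||M_f||\le 2\sqrt{2}\sqrt{\zeta(4)-\zeta(5)}\,||f||_{\mathcal{H}^2_2}$, which is exactly the claimed upper bound. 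Conversely, suppose $M_f$ is bounded on $\mathcal{H}^2_2(\mathbb{D})$. By Proposition \ref{pro-2.1}(b) the constant function $1$ lies in $\mathcal{H}^2_2(\mathbb{D})$, and $M_f1=f$; hence $f=M_f1\in\mathcal{H}^2_2(\mathbb{D})$. This settles the equivalence.

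\textbf{The lower bound $||f||_\infty\le||M_f||$.} Here I would exploit the reproducing kernel $K_w$. Using the defining property $g(w)=\langle g,K_w\rangle_{\mathcal{H}^2_2}$, a short computation shows $K_w$ is an eigenvector of the adjoint: for every $g$,
\begin{align*}
\langle g, M_f^*K_w\rangle_{\mathcal{H}^2_2}=\langle fg,K_w\rangle_{\mathcal{H}^2_2}=f(w)g(w)=\langle g,\overline{f(w)}\,K_w\rangle_{\mathcal{H}^2_2},
\end{align*}
so $M_f^*K_w=\overline{f(w)}\,K_w$. Taking norms and using $K_w\neq 0$ together with $||M_f^*||=||M_f||$ yields $|f(w)|\le||M_f||$ for every $w\in\mathbb{D}$, whence $||f||_\infty\le||M_f||$ after taking the supremum over $w$.

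\textbf{The lower bound $||f||_{\mathcal{H}^2_2}\le||M_f||$ — the main obstacle.} I expect this to be the genuine sticking point. The natural attempt is to feed a unit vector into $M_f$: testing against the normalized constant $e_0$ (with $||e_0||_{\mathcal{H}^2_2}=1$) gives $M_fe_0=f\,e_0$, hence $||M_f||\ge||M_fe_0||_{\mathcal{H}^2_2}=||f||_{\mathcal{H}^2_2}/||1||_{\mathcal{H}^2_2}$. Since $||1||_{\mathcal{H}^2_2}^2=(0+2)^5/(0+1)=32$, this only delivers $||f||_{\mathcal{H}^2_2}\le\sqrt{32}\,||M_f||$, and testing against $e_n$ as $n\to\infty$ recovers merely the weaker Hardy-norm bound $||f||_{\mathcal{H}^2}\le||M_f||$. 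The factor $\sqrt{32}$ reflects precisely that $||1||_{\mathcal{H}^2_2}=\sqrt{32}\neq 1$, so the constants are not unit vectors in this norm; reaching the bare constant $1$ appears to require renormalizing $||\cdot||_{\mathcal{H}^2_2}$ so that the constant function has unit norm, and without such a renormalization the inequality with constant $1$ is not what the elementary test-vector method produces. I would therefore flag this portion of the statement as the one demanding the most care, and would either adjust the constant to $||f||_{\mathcal{H}^2_2}\le\sqrt{32}\,||M_f||$ or pass to a normalized norm before asserting the sharp form.
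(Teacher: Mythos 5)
Your argument coincides with the paper's own proof on every point it actually establishes: the upper bound and the sufficiency come from Theorem \ref{thm-3.2}, the necessity from $f=M_f1$ with $1\in\mathcal{H}^2_2(\mathbb{D})$, and the bound $||f||_{\infty}\le||M_f||$ from the identity $M_f^*K_w=\overline{f(w)}K_w$ for the reproducing kernels, exactly as in the paper. The step you flag as the obstacle is precisely where the paper's proof goes wrong: it simply writes $||M_f||\ge||M_f1||=||f||_{\mathcal{H}^2_2}$, silently treating the constant function $1$ as a unit vector. It is not, since $||1||^2_{\mathcal{H}^2_2}=(0+2)^5/(0+1)=32$, so the test-vector argument only yields $||f||_{\mathcal{H}^2_2}\le\sqrt{32}\,||M_f||$, as you observe. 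Your diagnosis is in fact sharper than a mere gap in the proof: the stated inequality $||f||_{\mathcal{H}^2_2}\le||M_f||$ is false, as the example $f\equiv 1$ shows --- there $M_f$ is the identity, so $||M_f||=1$, while $||f||_{\mathcal{H}^2_2}=\sqrt{32}>1$ (note the claimed upper bound still holds in this example, so only the lower estimate is at fault). The correct conclusion from your argument is $\max\{||f||_{\infty},\,||f||_{\mathcal{H}^2_2}/\sqrt{32}\}\le||M_f||$, or equivalently the stated form after renormalizing the norm so that constants have unit length; either of the repairs you propose is the right one, and the if-and-only-if characterization of boundedness survives unchanged since the weaker lower bound already forces $f\in\mathcal{H}^2_2(\mathbb{D})$ whenever $M_f$ is bounded.
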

\begin{proof}[\bf Proof of Theorem \ref{thm-3.3}]
	For all $w\in\mathbb{D}$, $||M^*_fK_w||=||\overline{f(w)}K_w||=|f(w)|||K_w||$. Therefore, $|f(w)|\le||M^*_f||=||M_f||$ for all $w\in\mathbb{D}$ and hence $||f||_{\infty}\le||M_f||$. Also, $||M_f||\ge||M_f1||=||f||_{\mathcal{H}^2_2}$. Consequently, $\max\{||f||_{\infty},||f||_{\mathcal{H}^2_2}\}\le||M_f||$. It follows from Theorem \ref{thm-3.2} that for all $g\in\mathcal{H}^2_2(\mathbb{D})$, we have 
	\begin{align*}
		||M_fg||=||fg||\le2\sqrt{2}\sqrt{(\zeta(4)-\zeta(5))}||f||_{\mathcal{H}^2_2}||g||_{\mathcal{H}^2_2}.
	\end{align*}
	By setting $g$ to be the identity function, we obtain that \begin{align*}
		||M_f||\le2\sqrt{2}\sqrt{(\zeta(4)-\zeta(5))}||f||_{\mathcal{H}^2_2}.
	\end{align*} 
	This leads to the following conclusion
	\begin{align*}
		\max\{||f||_{\infty},||f||_{\mathcal{H}^2_2}\}\le||M_f||\le2\sqrt{2}\sqrt{(\zeta(4)-\zeta(5))}||f||_{\mathcal{H}^2_2}.
	\end{align*}
	This completes the proof.
\end{proof}
\section{Complex symmetry of weighted composition operators on $\mathcal{H}^2_2(\mathbb{D})$}
Let $ \mathcal{B}(\mathcal{H}) $ be the algebra of all bounded linear operators on a separable complex Hilbert space $ \mathcal{H} $. A mapping $T:\mathcal{H}\to\mathcal{H}$ is said to be anti-linear if 
\begin{align*}
	T(ax+by)=\overline{a}T(x)+\overline{b}T(y),
\end{align*} 
for all $x, y\in\mathcal{H}$ and for all $a, b\in\mathbb{C}$.
A conjugation on $ \mathcal{H} $ is an antilinear operator $ \mathcal{C} : \mathcal{H} \to \mathcal{H}$ which satisfies $ \langle \mathcal{C}x, \mathcal{C}y \rangle_{\mathcal{H}}=\langle y, x \rangle_{\mathcal{H}} $ for all $ x, y\in\mathcal{H} $ and $ \mathcal{C}^2=I_{\mathcal{H}} $, where $ I_{\mathcal{H}} $ is the identity operator on $ {\mathcal{H}} $. An operator $ T\in\mathcal{B}({\mathcal{H}}) $ is said to be complex symmetric if there exists a conjugation $ \mathcal{C} $ on $ \mathcal{H} $ such that $ T=\mathcal{C}T^*\mathcal{C} $, or equivalently, $T\mathcal{C}=\mathcal{C}T^*$. In this case, we say that $ T $ is complex symmetric with conjugation $ \mathcal{C} $.
\begin{defn}
An operator $T$ is said to be a Hilbert-Schmidt operator in a Hilbert space $\mathcal{H}$ if there exists an orthonormal basis $\{e_n\}_{n\ge0}$ for which the series $\sum_{n=0}^{\infty}||Te_n||^2$ converges to a finite value.
\end{defn}
 In \cite{Gupta-Malhotra-CVEE-2020}, Gupta and Malhotra have established that if $\phi$ belongs to the space $\mathcal{H}^2_1(\mathbb{D})$ and satisfies $||\phi||_{\infty}<1$, the operator $C_{\phi}$ is a Hilbert-Schmidt operator. In the next result, it is shown that the composition operator $C_{\phi}$ on $\mathcal{H}^2_2(\mathbb{D})$ is a Hilbert-Schmidt operator, and it is compact when $\phi$ belongs to $\mathcal{H}^2_2(\mathbb{D})$ and satisfies the condition $||\phi||_{\infty}<1$. The proof for the following result follows a similar line of reasoning as that of \cite[Theorem 4.2]{Gupta-Malhotra-CVEE-2020}.

\begin{thm}\label{thm-4.1}
	Let $\phi\in\mathcal{H}^2_2(\mathbb{D})$ be such that $||\phi||_{\infty}<1$. Then the composition operator $C_{\phi}$ is a Hilbert-Schmidt operator on $\mathcal{H}^2_2(\mathbb{D})$.
\end{thm}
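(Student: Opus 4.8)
The plan is to verify the Hilbert--Schmidt condition directly against the orthonormal basis $\{e_n\}$ furnished by Theorem \ref{thm-2.1}. Since
\begin{align*}
	C_\phi e_n=e_n\circ\phi=\sqrt{\frac{n+1}{(n+2)^5}}\,\phi^n,
\end{align*}
showing that $C_\phi$ is Hilbert--Schmidt amounts to showing that
\begin{align*}
	\sum_{n=0}^{\infty}\|C_\phi e_n\|^2_{\mathcal{H}^2_2}=\sum_{n=0}^{\infty}\frac{n+1}{(n+2)^5}\|\phi^n\|^2_{\mathcal{H}^2_2}<\infty.
\end{align*}
Thus the whole problem reduces to obtaining a good upper bound for $\|\phi^n\|^2_{\mathcal{H}^2_2}$ as a function of $n$.

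Before estimating, I would record two structural facts. First, $\phi\in\mathcal{H}^2_2(\mathbb{D})\subseteq A(\mathbb{D})$ is bounded, and by hypothesis $M:=\|\phi\|_{\infty}<1$. Second, because $\phi\in\mathcal{H}^2_2(\mathbb{D})$, a direct comparison of the coefficient weights shows that $\phi'$ lies in $\mathcal{H}^2_1(\mathbb{D})\cong\mathcal{S}^2(\mathbb{D})$, hence extends continuously to $\overline{\mathbb{D}}$ and is bounded; set $A:=\|\phi'\|_{\infty}<\infty$. I would then use repeatedly the contractive estimates $\|g\|_{\mathcal{H}^2}\le\|g\|_{\infty}$ and $\|g\|_{\mathcal{A}^2}\le\|g\|_{\infty}$, together with the submultiplicative bounds $\|hg\|_{\mathcal{H}^2}\le\|h\|_{\infty}\|g\|_{\mathcal{H}^2}$ and its Bergman analogue.

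Next I would estimate $\|\phi^n\|^2_{\mathcal{H}^2_2}$ by bounding each of its six constituent terms. Writing $(\phi^n)'=n\phi^{n-1}\phi'$ and $(\phi^n)''=n(n-1)\phi^{n-2}(\phi')^2+n\phi^{n-1}\phi''$ and pulling powers of $\|\phi\|_{\infty}=M$ out of each factor, one gets bounds of the shape $\|\phi^n\|^2_{\mathcal{H}^2}\le M^{2n}$, $\|(\phi^n)'\|^2_{\mathcal{H}^2}\le n^2M^{2(n-1)}A^2$, and, after applying $(a+b)^2\le 2a^2+2b^2$,
\begin{align*}
	\|(\phi^n)''\|^2_{\mathcal{H}^2}\le 2n^2(n-1)^2M^{2(n-2)}A^2\|\phi'\|^2_{\mathcal{H}^2}+2n^2M^{2(n-1)}\|\phi''\|^2_{\mathcal{H}^2},
\end{align*}
with identical estimates for the three Bergman terms. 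Collecting everything, there is a constant $B_\phi$, depending only on $M$, $A$, and the finite norms $\|\phi'\|_{\mathcal{H}^2}$, $\|\phi''\|_{\mathcal{H}^2}$, $\|\phi'\|_{\mathcal{A}^2}$, $\|\phi''\|_{\mathcal{A}^2}$, such that $\|\phi^n\|^2_{\mathcal{H}^2_2}\le B_\phi\, n^4 M^{2(n-2)}$ for every $n\ge 0$.

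Finally I would substitute this bound into the basis sum. Since $(n+1)n^4/(n+2)^5$ stays bounded in $n$, the series is dominated by a constant multiple of $\sum_{n\ge 0}M^{2(n-2)}$, which converges because $M<1$; hence $\sum_n\|C_\phi e_n\|^2_{\mathcal{H}^2_2}<\infty$, so $C_\phi$ is Hilbert--Schmidt (and in particular compact). The main obstacle is not the final convergence --- the geometric decay $M^{2n}$ trivially defeats the polynomial factor --- but rather the preparatory step of guaranteeing that every intermediate quantity is finite: one must know that $\phi'$ is bounded (which is exactly why the membership $\phi'\in\mathcal{H}^2_1(\mathbb{D})$ and its continuous extension to $\overline{\mathbb{D}}$ are essential) in order to control the mixed products $\phi^{n-2}(\phi')^2$ and $\phi^{n-1}\phi''$ arising in $(\phi^n)''$, after which the bookkeeping of the polynomial-in-$n$ coefficients across all six terms must be carried out with care.
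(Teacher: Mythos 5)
Your proposal is correct and follows essentially the same route as the paper: reduce the Hilbert--Schmidt condition to the basis sum $\sum_n \frac{n+1}{(n+2)^5}\|\phi^n\|^2_{\mathcal{H}^2_2}$, expand $(\phi^n)'$ and $(\phi^n)''$ by the product rule, bound each of the six norm terms by pulling out sup-norms and using $(a+b)^2\le 2(a^2+b^2)$ to obtain an estimate of the form $\|\phi^n\|^2_{\mathcal{H}^2_2}\lesssim n^4\|\phi\|_\infty^{2(n-1)}$, and conclude by geometric convergence. Your explicit verification that $\|\phi'\|_\infty<\infty$ (via $\phi'\in\mathcal{H}^2_1(\mathbb{D})\subseteq A(\mathbb{D})$) is a welcome refinement --- the paper's estimate for the terms $\phi^{n-2}(\phi')^2$ tacitly needs this control but does not supply it --- yet it does not change the overall strategy.
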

\begin{proof}[\bf Proof of Theorem \ref{thm-4.1}]
	In order to establish that the composition operator $C_{\phi}$ is a Hilbert-Schmidt operator, it is necessary to demonstrate that the series $\sum_{n=0}^{\infty}||C_{\phi}e_n(z)||^2$ has a finite sum. An easy computation shows that 
	
	\begin{align*}
		\sum_{n=0}^{\infty}||C_{\phi}e_n(z)||^2=\sum_{n=0}^{\infty}\bigg|\bigg|C_{\phi}\bigg(\frac{z^n}{||z^n||}\bigg)\bigg|\bigg|^2=1+\sum_{n=1}^{\infty}\frac{||\phi^n||^2_{\mathcal{H}^2_2}}{||z^n||^2_{\mathcal{H}^2_2}}=1+\sum_{n=1}^{\infty}\frac{(n+1)||\phi^n||^2_{\mathcal{H}^2_2}}{(n+2)^5}.
	\end{align*}
	and 
	\begin{align*}
		||\phi^n||^2_{\mathcal{H}^2_2}=&31||\phi\phi^{n-1}||^2_{\mathcal{H}^2}+41||(\phi^n)^{\prime}||^2_{\mathcal{H}^2}+||(\phi^n)^{\prime\prime}||^2_{\mathcal{H}^2}+||\phi\phi^{n-1}||^2_{\mathcal{A}^2}+49||(\phi^n)^{\prime}||^2_{\mathcal{A}^2}\\&+11||(\phi^n)^{\prime\prime}||^2_{\mathcal{A}^2}\\=&31||\phi\phi^{n-1}||^2_{\mathcal{H}^2}+41||n\phi^{n-1}\phi^{\prime}||^2_{\mathcal{H}^2}+||n(n-1)\phi^{n-2}(\phi^{\prime})^2+n\phi^{n-1}\phi^{\prime\prime}||^2_{\mathcal{H}^2}\\&+||\phi\phi^{n-1}||^2_{\mathcal{A}^2}+49||n\phi^{n-1}\phi^{\prime}||^2_{\mathcal{A}^2}+11||n(n-1)\phi^{n-2}(\phi^{\prime})^2+n\phi^{n-1}\phi^{\prime\prime}||^2_{\mathcal{A}^2}\\\le&31||\phi^{n-1}||^2_{\infty}||\phi||^2_{\mathcal{H}^2}+41n^2||\phi^{n-1}||^2_{\infty}||\phi^{\prime}||^2_{\mathcal{H}^2}+2[n^2(n-1)^2||\phi^{n-1}||^2_{\infty}||\phi^{\prime}||^2_{\mathcal{H}^2}\\&+n^2||\phi^{n-1}||^2_{\infty}||\phi^{\prime\prime}||^2_{\mathcal{H}^2}]+||\phi^{n-1}||^2_{\infty}||\phi||^2_{\mathcal{A}^2}+49n^2||\phi^{n-1}||^2_{\infty}||\phi^{\prime}||^2_{\mathcal{A}^2}\\&+22[n^2(n-1)^2||\phi^{n-1}||^2_{\infty}||\phi^{\prime}||^2_{\mathcal{A}^2}+n^2||\phi^{n-1}||^2_{\infty}||\phi^{\prime\prime}||^2_{\mathcal{A}^2}]\\\le&||\phi^{n-1}||^2_{\infty}[31||\phi||^2_{\mathcal{H}^2}+41n^2||\phi^{\prime}||^2_{\mathcal{H}^2}+n^2||\phi^{\prime\prime}||^2_{\mathcal{H}^2}+||\phi||^2_{\mathcal{A}^2}+49n^2||\phi^{\prime}||^2_{\mathcal{A}^2}\\&+11n^2||\phi^{\prime\prime}||^2_{\mathcal{A}^2}]+||\phi^{n-1}||^2_{\infty}[2n^2(n-1)^2\phi^{\prime}||^2_{\mathcal{H}^2}+n^2||\phi^{\prime\prime}||^2_{\mathcal{H}^2}\\&+22n^2(n-1)^2||\phi^{\prime}||^2_{\mathcal{A}^2}+11n^2||\phi^{\prime\prime}||^2_{\mathcal{A}^2}]\\\le&n^2||\phi^{n-1}||^2_{\infty}[31||\phi||^2_{\mathcal{H}^2}+41||\phi^{\prime}||^2_{\mathcal{H}^2}+||\phi^{\prime\prime}||^2_{\mathcal{H}^2}+||\phi||^2_{\mathcal{A}^2}+49||\phi^{\prime}||^2_{\mathcal{A}^2}+11||\phi^{\prime\prime}||^2_{\mathcal{A}^2}]\\&+n^4||\phi^{n-1}||^2_{\infty}[2\phi^{\prime}||^2_{\mathcal{H}^2}+||\phi^{\prime\prime}||^2_{\mathcal{H}^2}+22||\phi^{\prime}||^2_{\mathcal{A}^2}+11||\phi^{\prime\prime}||^2_{\mathcal{A}^2}]\\\le&2n^4||\phi||^{2(n-1)}_{\infty}||\phi||^2_{\mathcal{H}^2_2}.
	\end{align*}
	Thus, it follows that 
	\begin{align*}
		\sum_{n=0}^{\infty}\bigg|\bigg|C_{\phi}\bigg(\frac{z^n}{||z^n||}\bigg)\bigg|\bigg|^2&\le1+\sum_{n=1}^{\infty}\frac{2n^4(n+1)||\phi||^{2(n-1)}_{\infty}||\phi||^2_{\mathcal{H}^2_2}}{(n+2)^5}\\&\le1+\sum_{n=1}^{\infty}\frac{2n^4(n+1)||\phi||^{2(n-1)}_{\infty}||\phi||^2_{\mathcal{H}^2_2}}{(n+2)^5}\\&\le1+2||\phi||^2_{\mathcal{H}^2_2}\sum_{n=1}^{\infty}||\phi||^{2(n-1)}_{\infty}\\&=1+2||\phi||^2_{\mathcal{H}^2_2}(1-||\phi||^2_{\infty})^{-1}<\infty.
	\end{align*}
	Hence, $C_{\phi}$ is a Hilbert-Schmidt operator. 
\end{proof}
Our next result is for complex symmetric with conjugation $\mathcal{J}$ of the composition operator $C_{\phi}$. 
\begin{thm}\label{thm-4.2}
	Let $\phi$ be a self holomorphic map on $\mathbb{D}$ such that the composition operator $C_{\phi}:\mathcal{H}^2_2(\mathbb{D})\to\mathcal{H}^2_2(\mathbb{D})$ is complex symmetric with conjugation $\mathcal{J}$. Then $\phi(z)=az$, where $a=\phi^{\prime}(0), \phi(0)=0$. Conversely, if $\phi(z)=az$ maps the unit disk into itself where $a\in\mathbb{D}$, then the composition operator $C_{\phi}:\mathcal{H}^2_2(\mathbb{D})\to\mathcal{H}^2_2(\mathbb{D})$ is complex symmetric with conjugation $\mathcal{J}$. 
\end{thm}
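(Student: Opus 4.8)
The plan is to work throughout with the conjugation $\mathcal{J}$ given by $\mathcal{J}f(z)=\overline{f(\overline{z})}$, which in terms of the power series $f(z)=\sum_n a_nz^n$ simply conjugates the Taylor coefficients, $\mathcal{J}f(z)=\sum_n\overline{a_n}z^n$. First I would record the three facts that make everything run. (i) $\mathcal{J}$ is genuinely a conjugation on $\mathcal{H}^2_2(\mathbb{D})$: antilinearity and $\mathcal{J}^2=I$ are immediate from coefficient-conjugation, and the identity $\langle\mathcal{J}f,\mathcal{J}g\rangle_{\mathcal{H}^2_2}=\langle g,f\rangle_{\mathcal{H}^2_2}$ holds because the weights $(n+2)^5/(n+1)$ are real. (ii) Since the normalizing constants $\sqrt{(n+1)/(n+2)^5}$ in Theorem \ref{thm-2.1} are real, $\mathcal{J}$ fixes every basis vector, $\mathcal{J}e_n=e_n$. (iii) Consequently, for this real orthonormal basis $\{e_n\}$, a short computation tracking antilinearity shows that the defining relation $\mathcal{J}C_\phi\mathcal{J}=C_\phi^*$ is \emph{equivalent} to the matrix $[\langle C_\phi e_n,e_m\rangle]_{m,n}$ being symmetric, that is, $\langle C_\phi e_n,e_m\rangle=\langle C_\phi e_m,e_n\rangle$ for all $m,n$.

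For the direct implication I would exploit (iii). Writing $w_k=(k+2)^5/(k+1)=\|z^k\|^2_{\mathcal{H}^2_2}$ and letting $c_m(\phi^n)$ denote the $m$-th Taylor coefficient of $\phi^n$, the relation $\langle\phi^n,z^m\rangle_{\mathcal{H}^2_2}=w_m\,c_m(\phi^n)$ converts the symmetry condition into
\be
w_m\,c_m(\phi^n)=w_n\,c_n(\phi^m)\qquad\text{for all }m,n\ge 0.
\ee
Specializing $n=0$ (so $\phi^0\equiv 1$ and $c_m(1)=\delta_{m0}$) forces $c_0(\phi^m)=(\phi(0))^m=0$ for every $m\ge 1$, hence $\phi(0)=0$. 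Then specializing $n=1$ and using that $\phi(0)=0$ makes $\phi^m$ vanish to order $m$, so $c_1(\phi^m)=0$ for $m\ge 2$; the equation becomes $w_m\,c_m(\phi)=0$, forcing all Taylor coefficients of $\phi$ beyond the first to vanish. This yields $\phi(z)=az$ with $a=\phi'(0)$, as claimed.

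For the converse, take $\phi(z)=az$ with $a\in\mathbb{D}$. Then $C_\phi$ is diagonal in the basis, $C_\phi e_n=a^n e_n$, so it is bounded (by diagonalization, or via Theorem \ref{thm-4.1}) and $C_\phi^* e_n=\overline{a}^n e_n$. Checking the defining identity on basis vectors, $C_\phi\mathcal{J}e_n=C_\phi e_n=a^n e_n$, whereas by antilinearity of $\mathcal{J}$ one has $\mathcal{J}C_\phi^* e_n=\mathcal{J}(\overline{a}^n e_n)=a^n e_n$; the two sides agree on a basis, so $C_\phi\mathcal{J}=\mathcal{J}C_\phi^*$ and $C_\phi$ is complex symmetric with conjugation $\mathcal{J}$.

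The step I expect to be the main obstacle is (iii): carefully converting the operator identity $\mathcal{J}C_\phi\mathcal{J}=C_\phi^*$ into the clean matrix-symmetry statement, since one must keep the antilinearity of $\mathcal{J}$ and the conjugates in the adjoint entries straight, and then perform the bookkeeping with the weights $w_k$ that turns symmetry into the displayed coefficient equation. Once that reduction is in place, extracting first $\phi(0)=0$ and then the linearity of $\phi$ is routine, and the converse is a one-line check on the basis.
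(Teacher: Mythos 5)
Your proof is correct, but it takes a different route from the paper's. The paper never writes down $\mathcal{J}$ explicitly; it works instead with reproducing kernels, using the density of their span to reduce $C_\phi\mathcal{J}=\mathcal{J}C_\phi^*$ to the functional identity $K_{\overline{w}}(\phi(z))=K_{\overline{\phi(w)}}(z)$, i.e. $\sum_n \frac{n+1}{(n+2)^5}(\phi(z)w)^n=\sum_n \frac{n+1}{(n+2)^5}(z\phi(w))^n$, and then differentiates in $z$, sets $z=0$ to get $\phi'(0)\,h(w)=\frac{2}{243}\phi(w)$, and sets $w=0$ to conclude $\phi(0)=0$ and $\phi(z)=az$. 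Your argument replaces the kernel identity by the standard criterion that $T$ is $\mathcal{J}$-symmetric iff its matrix in a $\mathcal{J}$-real orthonormal basis is symmetric, which turns the problem into the coefficient relations $w_m c_m(\phi^n)=w_n c_n(\phi^m)$; specializing $n=0$ and $n=1$ is exactly the discrete analogue of the paper's "evaluate at $0$, then differentiate" step, so the two computations are generating-function duals of one another. What your version buys: you actually define $\mathcal{J}f(z)=\overline{f(\overline z)}$, verify it is a conjugation, and justify why it fixes the basis — facts the paper silently assumes (in particular the identity $\mathcal{J}K_w=K_{\overline w}$ underlying the paper's very first line is never checked there) — and your converse, via the diagonal action $C_\phi e_n=a^ne_n$ and $\mathcal{J}C_\phi^*e_n=a^ne_n$, is cleaner than the paper's kernel verification, which as printed even drops the $(azw)^n$ factor. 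The only point to make sure you spell out is the equivalence in your step (iii): the chain $\langle \mathcal{J}T^*\mathcal{J}e_n,e_m\rangle=\langle \mathcal{J}T^*e_n,\mathcal{J}e_m\rangle=\langle e_m,T^*e_n\rangle=\langle Te_m,e_n\rangle$, valid because $\mathcal{J}e_k=e_k$, together with boundedness of both sides, gives exactly the symmetry criterion you use.
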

\begin{proof}[\bf Proof of Theorem \ref{thm-4.2}]
	Suppose $C_{\phi}$ is complex symmetric with conjugation $\mathcal{J}$. Since the linear span of reproducing kernels is dense in any reproducing kernel Hilbert space, for all $w,z\in\mathbb{D}$, we have 
	\begin{align*}
		(C_{\phi}\mathcal{J})K_w(z)=(\mathcal{J}C^*_{\phi})k_w(z),
	\end{align*} 
	that is, $K_{\overline{w}(\phi(z))}=K_{\overline{\phi(w)}(z)}$. Thus, for all $w,z\in\mathbb{D}$,
	\begin{align}\label{Eq-4.1}
		\sum_{n=0}^{\infty}\frac{n+1}{(n+2)^5}(\phi(z)w)^n=	\sum_{n=0}^{\infty}\frac{n+1}{(n+2)^5}(z\phi(w))^n.
	\end{align}
	Taking the derivative with respect to $z$ on the both sides in \eqref{Eq-4.1}, we obtain
	\begin{align}\label{Eq-4.2}
		\sum_{n=0}^{\infty}\frac{n(n+1)}{(n+2)^5}(\phi(z)^{n-1}\phi^{\prime}(z)w^n=	\sum_{n=0}^{\infty}\frac{n(n+1)}{(n+2)^5}z^{n-1}\phi(w)^n.	
	\end{align}
	On substituting $z=0$ in \eqref{Eq-4.2}, we obtain
	\begin{align}\label{Eq-4.3}
		\phi^{\prime}(0)\sum_{n=0}^{\infty}\frac{n(n+1)}{(n+2)^5}(\phi(0)^{n-1}w^n=\frac{2}{243}\phi(w)\;\; \mbox{for all}\;\; w\in\mathbb{D}.
	\end{align}
	Again putting $w=0$ in \eqref{Eq-4.3}, it follows that $\phi(0)=0$. Hence, from \eqref{Eq-4.3} we get $\phi(z)=az$ where $a=\phi^{\prime}(0)$.\vspace{1.2mm}
	
	Conversely, suppose $\phi(z)=az$ is a self holomorphic map on $\mathbb{D}$ where $a\in\mathbb{D}$. It can be easily verified that for all $w,z\in\mathbb{D}$
	\begin{align*}
		K_{\overline{w}(\phi(z))}=\sum_{n=0}^{\infty}\frac{n+1}{(n+2)^5}=K_{\overline{\phi(w)}}(z).
	\end{align*}
	This proves that $C_{\phi}$ is complex symmetric with conjugation $\mathcal{J}$.
\end{proof}
We obtain the following result showing that if $ W_{\Psi, \phi} $ is complex symmetric with conjugation $\mathcal{J}$, then explicit form of $ \phi(z) $ and $ \Psi(z) $ can be obtained.
\begin{thm}\label{thm-3.4}
	Let $\Psi:\mathbb{D}\to\mathbb{C}$ and $\phi:\mathbb{D}\to\mathbb{D}$ be holomorphic maps such that the weighted composition operator $W_{\Psi, \phi}:\mathcal{H}^2_2(\mathbb{D})\to\mathcal{H}^2_2(\mathbb{D})$ is complex symmetric with conjugation $\mathcal{J}$. Then for all $z\in\mathbb{D}$,
	\begin{align*}
		\phi(z)=a_0+a_1\frac{q(z)}{p(z)}, \; \Psi(z)=a_2p(z),
	\end{align*}
	where $a_0=\phi(0), a_1=\phi^{\prime}(0), a_2=3888\Psi(0),$
	\[
	\begin{cases}
		p(z)=\displaystyle\frac{2}{243}\sum_{n=0}^{\infty}((n+1)/(n+2)^5)a^n_0z^n\\ q(z)=\displaystyle\frac{1}{32}\sum_{n=1}^{\infty}(n(n+1)/(n+2)^5)a^{n-1}_0z^n.
	\end{cases}
	\]Conversely, let $a_0,a_1\in\mathbb{D}$ and $a_2\in\mathbb{C}$. If $\phi(z)=a_0+a_1(q(z)/p(z))$ and $\Psi(z)=a_2p(z)$ are holomorphic maps on $\mathbb{D}$, where 
	
	\begin{align}\label{eq-3.1}
		\begin{cases}
			p(z)=\displaystyle\frac{2}{243}\sum_{n=0}^{\infty}((n+1)/(n+2)^5)a^n_0z^n;\\
			q(z)=\displaystyle\frac{1}{32}\sum_{n=1}^{\infty}(n(n+1)/(n+2)^5)a^{n-1}_0z^n
		\end{cases}
	\end{align} 
	then the weighted composition operator $W_{\Psi, \phi}:\mathcal{H}^2_2(\mathbb{D})\to\mathcal{H}^2_2(\mathbb{D})$ is complex symmetric with conjugation $\mathcal{J}$ only if $a_0=0$ or $a_1=0$ or both are zero.
\end{thm}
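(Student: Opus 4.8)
The plan is to convert the operator identity $W_{\Psi,\phi}\mathcal{J}=\mathcal{J}W_{\Psi,\phi}^{*}$ into a scalar functional equation on the reproducing kernels and then recover $\Psi$ and $\phi$ from its low-order Taylor data, exactly as in the proof of Theorem \ref{thm-4.2} but now carrying the weight $\Psi$. Writing $c_n=(n+1)/(n+2)^5$ and $\tilde K(t)=\sum_{n\ge0}c_n t^n$, so that $K_w(z)=\tilde K(\overline{w}z)$, I would use the two standard facts $\mathcal{J}K_w=K_{\overline{w}}$ and $W_{\Psi,\phi}^{*}K_w=\overline{\Psi(w)}\,K_{\phi(w)}$. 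Applying $W_{\Psi,\phi}\mathcal{J}=\mathcal{J}W_{\Psi,\phi}^{*}$ to $K_w$ and using antilinearity gives, for all $w,z\in\mathbb{D}$,
\[
\Psi(z)\,\tilde K\!\big(w\,\phi(z)\big)=\Psi(w)\,\tilde K\!\big(z\,\phi(w)\big).
\]
Since the span of the kernels is dense, this identity is equivalent to $\mathcal{J}$-complex symmetry of $W_{\Psi,\phi}$, and it is the object I would analyze throughout. I would dispose of the degenerate case $\Psi(0)=0$ (equivalently $a_2=0$, $\Psi\equiv0$) at the outset.

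For necessity of the stated forms I would read off two slices. Setting $w=0$ annihilates every $n\ge1$ term on the left and leaves $\tilde K(0)\Psi(z)=\tfrac1{32}\Psi(z)$, while the right becomes $\Psi(0)\tilde K(a_0z)$; hence $\Psi(z)=32\Psi(0)\tilde K(a_0z)$, which is exactly $\Psi(z)=a_2p(z)$ with $a_2=3888\Psi(0)$ (using $32\cdot\tfrac{243}{2}=3888$). Next, differentiating the displayed identity in $z$ and setting $z=0$ isolates on the right the single surviving $n=1$ term $\tfrac{2}{243}\Psi(w)\phi(w)$ and on the left a combination of $\tilde K(a_0w)$ and $w\,\tilde K'(a_0w)$, i.e. of $p$ and $q$. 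Solving for $\phi$ and evaluating at $0$ (where $q(0)=0$, $p(0)=\tfrac1{3888}$) yields $\phi(z)=a_0+a_1\,q(z)/p(z)$ with $a_0=\phi(0)=\tfrac{243\Psi'(0)}{64\Psi(0)}$ and $a_1=\phi'(0)$. These two steps are forced and constitute the routine part of the argument.

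The substantive point is that the displayed identity encodes infinitely many relations, not just these two jets, and the extra hypothesis enters here. Equivalently $F(z,w):=\Psi(z)\tilde K(w\phi(z))$ must be symmetric in $(z,w)$; the $w^{0}$ and $w^{1}$ rows of this symmetry hold automatically, precisely because $\Psi,\phi$ were built from those rows. I would therefore compare the $w^{2}$ rows, $\partial_w^{2}F(z,0)=\partial_z^{2}F(0,z)$, which reads
\[
2c_2\,\Psi(z)\,\phi(z)^2=\Psi''(0)\tilde K(a_0z)+2\Psi'(0)a_1 z\tilde K'(a_0z)+\Psi(0)a_1^{2}z^{2}\tilde K''(a_0z)+2\Psi(0)b_2\,z\tilde K'(a_0z),
\]
with $c_2=\tfrac{3}{1024}$ and $b_2=\tfrac12\phi''(0)$. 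Substituting $\Psi(z)=32\Psi(0)\tilde K(a_0z)$ and $\phi(z)=a_0+\tfrac{243a_1}{64}\,z\tilde K'(a_0z)/\tilde K(a_0z)$, the constant and $z\tilde K'$ terms cancel automatically (consistent with the $w^1$ row), and after clearing the denominator $\tilde K(a_0z)$ the relation collapses to
\[
\tfrac{59049}{64}\,c_2\,a_1^{2}\,\tilde K'(a_0z)^{2}=a_1^{2}\,\tilde K(a_0z)\,\tilde K''(a_0z).
\]
The crux, and the main obstacle, is to show this cannot hold when $a_0a_1\neq0$: it is satisfied at $z=0$ but, comparing the next Taylor coefficient and dividing by $a_0a_1^{2}\Psi(0)\neq0$, it reduces to the explicit rational identity $8c_2^{2}=6c_1c_3+2c_1^{2}c_2/c_0$ among the kernel weights. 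Since $8c_2^{2}=\tfrac{9}{131072}$ while $6c_1c_3+2c_1^{2}c_2/c_0$ is strictly larger for $c_0=\tfrac1{32},\,c_1=\tfrac2{243},\,c_2=\tfrac3{1024},\,c_3=\tfrac4{3125}$, the identity is false, so $a_0a_1\neq0$ is untenable and we conclude $a_0=0$ or $a_1=0$. I expect verifying the failure of this single numerical identity to be the decisive computation.

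Finally, for sufficiency of the two surviving cases I would return to the displayed identity. If $a_1=0$ then $\phi\equiv a_0$ and $\Psi(z)=32\Psi(0)\tilde K(a_0z)$, so both sides equal $32\Psi(0)\tilde K(a_0z)\tilde K(a_0w)$ and the identity holds. If $a_0=0$ then $p$ is constant and $q$ is linear, whence $\phi(z)=a_1z$ and $\Psi$ is constant; then $W_{\Psi,\phi}$ is a scalar multiple of $C_{\phi}$ with $\phi(z)=a_1z$, and $\mathcal{J}$-complex symmetry follows from Theorem \ref{thm-4.2} together with the elementary fact that scalar multiples of a $\mathcal{J}$-symmetric operator are again $\mathcal{J}$-symmetric. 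This closes the equivalence.
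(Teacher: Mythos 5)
Your proposal is correct and follows essentially the same route as the paper: reduce $\mathcal{J}$-symmetry to the kernel identity $\Psi(z)\tilde K(w\phi(z))=\Psi(w)\tilde K(z\phi(w))$, read off $\Psi$ from the $w=0$ slice and $\phi$ from the first derivative at the origin, and then force $a_0a_1=0$ by exhibiting a higher-order Taylor coefficient on which the two sides must disagree. The only differences are minor: the paper compares the coefficient of $z^3w$ directly in the double power series, whereas you compare the $w^2$-row and collapse it to the single numerical non-identity $8c_2^2\neq 6c_1c_3+2c_1^2c_2/c_0$ (which does fail, since $8c_2^2=9/131072$ while the right-hand side is strictly larger), and you additionally prove sufficiency in the surviving cases $a_0=0$ or $a_1=0$, which the paper's ``only if'' formulation does not require.
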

\begin{proof}[\bf Proof of Theprem \ref{thm-3.4}]
	First we suppose that $W_{\Psi, \phi}$ is complex symmetric with conjugation $\mathcal{J}$. Then $(W_{\Psi, \phi}\mathcal{J})K_w(z)=(\mathcal{J}W^*_{\Psi, \phi})K_w(z)$ for all $w, z\in\mathbb{D}$ and it follows that $\Psi(z)K_{\overline{w}}(\phi(z))=\Psi(w)K_{\overline{\phi(w)}}(z)$. This implies that 
	\begin{align}
		\Psi(z)\sum_{n=0}^{\infty}\frac{n+1}{(n+2)^5}(\phi(z)w)^n=\Psi(w)\sum_{n=0}^{\infty}\frac{n+1}{n+2)^5}(z\phi(w))^n.
	\end{align}
	Suppose that
	\begin{align*}
		g_z(w)=\sum_{n=0}^{\infty}\frac{n+1}{(n+2)^5}(\phi(z)w)^n\;\; \mbox{and}\;\; h_z(w)=\sum_{n=0}^{\infty}\frac{n+1}{(n+2)^5}(z\phi(w))^n
	\end{align*}
	for all $w, z\in\mathbb{D}$. Hence, it follows that 
	\begin{align}\label{Eq-3.3}
		\Psi(z)g_z(w)=\Psi(w)h_z(w)\;\; \mbox{for all}\;\; w, z\in\mathbb{D}.
	\end{align}
	Setting $w=0$ in \eqref{Eq-3.3}, we obtain that $\psi(z)g_z(0)=\Psi(0)h_z(w)$ and hence 
	\begin{align}\label{Eq-3.4}
		\Psi(z)=\Psi(0)\frac{h_z(0)}{g_z(0)}\;\; \mbox{for all}\;\; w, z\in\mathbb{D}.
	\end{align}
	By using \eqref{Eq-3.4}, we obtain from \eqref{Eq-3.3} that 
	\begin{align}\label{Eq-3.5}
		\Psi(0)\frac{h_z(0)}{g_z(0)}g_z(w)=\Psi(0)\frac{h_w(0)}{g_w(0)}h_z(w).
	\end{align}
	It can be easily shown that $g_z(w)=h_w(z)$ for all $w,z \in\mathbb{D}$. Using this in \eqref{Eq-3.5}, we have $(h_z(0)/g_z(0))g_z(w)=(g_0(w)/h_0(w))h_z(w)$ for all $w, z\in\mathbb{D}$. Therefore, we see that
	\begin{align}\label{Eq-3.6}
		\bigg(\frac{h_z(0)}{g_z(0)}\bigg)g_z(w)h_0(w)=g_0(w)h_z(w).
	\end{align}
	Taking the derivative of \eqref{Eq-3.6} with respect to $w$, we obtain
	\begin{align}\label{Eq-3.7}
		\bigg(\frac{h_z(0)}{g_z(0)}\bigg)(g^{\prime}_z(w)h_0(w)+g_z(w)h^{\prime}_0(w))=g^{\prime}_0(w)h_z(w)+g_0(w)h^{\prime}_z(w).
	\end{align}
	Putting $w=0$ in \eqref{Eq-3.7}, we obtain 
	\begin{align}\label{Eq-3.8}
		h_z(0)(g^{\prime}_z(0)h_0(0)+g_z(0)h^{\prime}_0(0))=g_z(0)(g^{\prime}_0(0)h_z(0)+g_0(0)h^{\prime}_z(0))
	\end{align}
	By using the defining formula of $g_z(w)$ and $h_z(w)$, we obtain
	\begin{align*}
		g_z(0)&=g_0(0)=h_0(0)=\frac{1}{32};\\g^{\prime}_z(0)&=\frac{2}{243}\phi(z);\\g^{\prime}_0(0)&=\frac{2}{243}\phi(0);\\h_z(0)&=\sum_{n=0}^{\infty}\frac{n+1}{(n+2)^5}(z\phi(0))^n;\\h^{\prime}_z(0)&=\sum_{n=1}^{\infty}\frac{n(n+1)}{(n+2)^5}(\phi(0))^{n-1}\phi^{\prime}(0)z^n;\\h^{\prime}_0(0)&=0.
	\end{align*}
	Denoting $a_0 = \phi(0)$ and $a_1 = \phi^{\prime}(0)$, and then substituting the above expressions into equation \eqref{Eq-3.8}, we obtain
	\begin{align*}
		\bigg(\sum_{n=0}^{\infty}\frac{n+1}{(n+2)^5}a^n_0z^n\bigg)\bigg(\frac{2}{243}\phi(z)\bigg)&=\frac{2}{243}a_0\sum_{n=0}^{\infty}\frac{n+1}{(n+2)^5}a^n_0z^n+\frac{1}{32}a_1\sum_{n=1}^{\infty}\frac{n(n+1)}{(n+2)^5}a^{n-1}_0z^n.\\\Rightarrow \phi(z)&=a_0+a_1\frac{q(z)}{p(z)},
	\end{align*}
	where 
	\begin{align*}
		p(z)=\frac{2}{243}\sum_{n=0}^{\infty}\frac{n+1}{(n+2)^5}a^n_0z^n \;\;\mbox{and}\;\; q(z)=\frac{1}{32}\sum_{n=1}^{\infty}\frac{n(n+1)}{(n+2)^5}a^{n-1}_0z^n.
	\end{align*}
	Take $a_2 = 3888\Psi(0)$. Consequently, it follows that $\phi(z) = a_0 + a_1(q(z)/p(z))$ and $\Psi(z) = a_2p(z)$ for all $z\in\mathbb{D}$.\vspace{1.2mm}
	
	Conversely, suppose that $\phi(z) = a_0 + a_1(q(z)/p(z))$ and $\Psi(z) = a_2p(z)$ where $a_0, a_1\in\mathbb{D}, a_2\in\mathbb{C}$ and $p(z)$ and $q(z)$ are as in \eqref{eq-3.1}. Let $w$ and $z$ be elements of $\mathbb{D}$. For $W_{\Psi, \phi}$ to have complex symmetry, the following must hold:
	\begin{align}
		\Psi(z)K_{\overline{w}}(\phi(z))=\Psi(w)K_{\overline{\phi(w)}}(z),
	\end{align}
	that is 
	\begin{align}\label{Eq-3.10}
		\bigg(&\sum_{n=0}^{\infty}\frac{n+1}{(n+2)^5}a^n_0z^n\bigg)\bigg(\sum_{n=0}^{\infty}\frac{n+1}{(n+2)^5}\bigg(a_0w+a_1\frac{q(z)}{p(z)}w\bigg)^n\bigg)\\&=\bigg(\sum_{n=0}^{\infty}\frac{n+1}{(n+2)^5}a^n_0w^n\bigg)\bigg(\sum_{n=0}^{\infty}\frac{n+1}{(n+2)^5}\bigg(a_0z+a_1\frac{q(w)}{p(w)}z\bigg)^n\bigg)\nonumber.
	\end{align}
	In the entirety of the unit disk $\mathbb{D}$, the function $q(z)/p(z)$ is holomorphic, and it satisfies the condition $q(0)=0$. Consequently, we can express $q(z)/p(z)$ as an infinite series:
	\begin{align*}
		\frac{q(z)}{p(z)}=\sum_{i=1}^{\infty}c_ia^{i-1}_0z^i,
	\end{align*}
	where $c_1=1$, and for $i=2, 3, \ldots$, the coefficients $c_i$ are positive real numbers less than one. Thus, \eqref{Eq-3.10} is equivalent to 
	\begin{align*}
		\bigg(&\sum_{n=0}^{\infty}\frac{n+1}{(n+2)^5}a^n_0z^n\bigg)\bigg(\sum_{m=0}^{\infty}\frac{m+1}{(m+2)^5}\bigg(\sum_{k=0}^{m}\binom{m}{k}(a_0w)^k\bigg(\sum_{i=1}^{\infty}a_1c_ia^{i-1}_0z^iw\bigg)^{m-k}\bigg)\bigg)\\&=\bigg(\sum_{n=0}^{\infty}\frac{n+1}{(n+2)^5}a^n_0w^n\bigg)\\&\times\bigg(\sum_{m=0}^{\infty}\frac{m+1}{(m+2)^5}\bigg(\sum_{k=0}^{m}\binom{m}{k}(a_0z)^k\bigg(\sum_{i=1}^{\infty}a_1c_ia^{i-1}_0w^iz\bigg)^{m-k}\bigg)\bigg).
	\end{align*}
	On comparing the coefficients of $z^3w$, on L.H.S, we obtain
	\begin{align}\label{Eq-3.11}
		\frac{2}{3^5}\bigg(\frac{4}{5^5}a^4_0+\frac{3}{4^5}a^2_0a_1c_1+\frac{2}{3^5}a^2_0a_1c_2+\frac{1}{2^5}a^2_0a_1c_3\bigg),
	\end{align}
	whereas on R.H.S, we obtain
	\begin{align}\label{Eq-3.12}
		\frac{4}{5^5}\bigg(\frac{2}{3^5}a^4_0+\frac{3}{2^5}a^2_0a_1c_1\bigg).
	\end{align}
	Consequently, we find that the expressions of \eqref{Eq-3.11} and \eqref{Eq-3.12} will be equal only if $a_0$ is zero or $a_1$ is zero or both are zero. With this the proof is completed.
\end{proof}

\vspace{2mm}
\noindent\textbf{Compliance of Ethical Standards}\\

\noindent\textbf{Conflict of interest} The authors declare that there is no conflict  of interest regarding the publication of this paper.\vspace{1.5mm}

\noindent\textbf{Data availability statement}  Data sharing not applicable to this article as no datasets were generated or analysed during the current study.

\end{document}